\definecolor{MyLinkColor}{rgb}{0,0,0.4}
\newcommand{\R}{{\mathbb R}}
\newcommand{\bA}{{\mathbb{A}}}
\newcommand{\bB}{{\mathbb{B}}}
\newcommand{\N}{{\mathbb N}}
\newcommand{\kH}{\mathcal{H}}
\newcommand{\cO}{\mathcal{O}}
\newcommand{\kL}{\mathcal{L}}
\newcommand{\re}{\mathop{\rm Re}\nolimits}
\newcommand{\PV}{\mathop{\rm PV}\nolimits}
\newcommand{\ov}{\overline}
\newcommand{\p}{\partial}
\newcommand{\e}{\varepsilon}
\newcommand{\0}{\Omega}
\newcommand{\G}{\Gamma}
\newcommand{\supp}{\mathop{\rm supp}\nolimits}
\newtheorem{thm}{Theorem}[section]
\newtheorem{prop}[thm]{Proposition}
\newtheorem{lemma}[thm]{Lemma}
\theoremstyle{remark}
\numberwithin{equation}{section} 
\begin{document}

\title[The Mullins-Sekerka problem via the method of potentials]{The Mullins-Sekerka problem via the method of potentials}

\thanks{}

\author[J. Escher]{Joachim Escher}
\address{Institut f{\"u}r Angewandte Mathematik, Leibniz Universit{\"a}t Hannover, Welfengarten~1, 30167 Hannover, Germany.}
\email{escher@ifam.uni-hannover.de}

\author{Anca--Voichita Matioc}
\author{Bogdan--Vasile Matioc}
\address{Fakult\"at f\"ur Mathematik, Universit\"at Regensburg,   93040 Regensburg, Deutschland.}
\email{anca.matioc@ur.de}
\email{bogdan.matioc@ur.de}

\begin{abstract}
It is shown that the two-dimensional Mullins-Sekerka problem is well-posed in all subcritical Sobolev spaces $H^r(\R)$ with $r\in(3/2,2).$
This is the first result where this issue is established in an unbounded geometry. 
The novelty of our approach is the use of potential theory to formulate the model as an evolution problem with nonlinearities expressed by singular integral operators.
\end{abstract}

\subjclass[2020]{35K93; 35A01; 35R37
}
\keywords{Mullins-Sekerka; Well-posedness; Parabolic smoothing; Singular integrals}

\maketitle

\pagestyle{myheadings}
\markboth{\sc{J. Escher, A.-V.~Matioc, \& B.-V.~Matioc}}{\sc{The Mullins-Sekerka problem via the method of potentials}}

 \section{Introduction}\label{Sec:1}

The Mullins-Sekerka problem in a bounded geometry is a moving boundary problem which appears as the gradient flow of the area functional with respect
to a metric which is formally equivalent to the $H^{-1}$-metric on the tangent space of all oriented hypersurfaces which enclose a fixed volume~\cite{ON01, Ga13}. 
It describes the evolution of two domains $\0^+(t)$ and~${\0^-(t)}$  together with the sharp interfaces  $\Gamma(t)$ that separates them 
 in such a way that the volumes of $\0^\pm(t)$  are preserved and the area  of~$\Gamma(t)$ is decreased~\cite{EGK17, Gu93, Ga13}.
Known also as  the two-phase Hele-Shaw problem,  it may  also be derived as  a singular limit of the Cahn-Hilliard problem when the thickness of the transition layer between the phases vanishes \cite{S96, ABC94}.

Most of the mathematical studies regarding this two-phase problem consider a bounded geometry with $\Omega^\pm(t)$ being open subsets of a larger domain $\0$  and either $\G(t) $ is a compact manifold without boundary 
\cite{ES98, ES96a, Mayer98, BGN10} or $\Gamma(t)$ intersects the boundary $\p\0$ of~$\0$ orthogonally~\cite{GR22, HRW21, ABCF00}.
Existence results in the setting of classical solutions have been established  almost simultaneously in \cite{CHF96, ES98, ES96a} under the assumption that $\Gamma(0)$ 
is a compact ${\rm C}^{k+\beta}$-hypersurface without boundary in $\R^n,$   $\beta\in(0,1)$
 and $n\geq 2$, with $k=3$ in \cite{CHF96} and $k=2$ in  \cite{ES98, ES96a}.
Subsequently, the well-posedness of the Mullins-Sekerka problem for $W^{1+3\mu-4/p}_p$ initial geometries, where $1/3 + (n + 3)/3p<\mu\leq1$, was proven in the recent monograph \cite{PS16}.
The existence theory in the situation with a contact angle condition of $\pi/2$ was established only recently in \cite{GR22, HRW21}.
We also refer to \cite{GR22, ES98, PS16} where stability issues are investigated and to \cite{BGN10, ZCH96, ET23} for numerical studies pertaining to this problem.
Finally, we mention  the papers  \cite{R05, BGS98, HS22xx, CL21, JMPS22x} where weak solutions to the Mullins-Sekerka problem are studied.

In this paper we consider the situation when the two phases are both unbounded and we restrict to the two-dimensional case.
To be more precise, we assume that at each time instant $t\geq0$ we have
\[
\0^\pm(t)=\{(x,y)\in\R^2\,:\, y\gtrless f(t,x)\}\qquad\text{and}\qquad \G(t):=\{(x,f(t,x)):\, x\in\R\},
\]
where $f(t):\R\to\R$, $t\geq0$, is an unknown function.
The same  setting has been also considered in \cite{COW19} where the authors establish convergence rates to a planar interface for global solutions (assuming they exist).
Our goal is to establish the well-posedness of the Mullins-Sekerka problem in this unbounded regime for initial data whose regularity is close of being optimal.
 To be more precise, the equations of motion are described by the following system  of equations
\begin{subequations}\label{PB}
\begin{equation}\label{PB1}
\left.
\arraycolsep=1.4pt
\begin{array}{rcllll}
\Delta u^\pm(t)&=&0&\quad\text{in $\Omega^\pm(t)$,}\\[1ex]
u^\pm(t)&=& \kappa_{\Gamma(t)}&\quad\text{on $\Gamma(t)$,}\\[1ex]
\nabla u^\pm(t) &\to&0&\quad\text{for $|(x,y)|\to\infty$,}\\[1ex]
V(t)&=&-[\p_{\nu_{\Gamma(t)}}  u(t)]&\quad\text{on $\Gamma(t)$}
\end{array}
\right\}
\end{equation}
for $t>0$.
Above, $\nu_{\Gamma(t)} $, $V(t)$, and~$\kappa_{\Gamma(t)}$  are the unit normal which points into $\Omega^+(t)$, the normal velocity, and the curvature of~$\Gamma(t)$.
Moreover, 
\[
[\p_{\nu_{\Gamma(t)}}  u(t)]:=\p_{\nu_{\Gamma(t)}}  u^+(t)-\p_{\nu_{\Gamma(t)}}  u^-(t),\qquad t>0,
\] 
represents the jump of $\nabla u(t)$ across $\Gamma(t)$ in normal direction.
The system \eqref{PB1} is supplemented by the initial condition
 \begin{equation}\label{IC}
 \arraycolsep=1.4pt
\begin{array}{rcllll}
f(0)&=&f_0.
\end{array}
\end{equation}
\end{subequations} 
Before presenting our main result, we emphasize that, under suitable conditions, the interface~$f(t)$ identifies at each 
time instant $t\ge 0$ the functions $u^\pm(t)$ uniquely, see Proposition~\ref{P:1} below.
Therefore, from now on, we shall  only refer    to $f$ as being a solution to \eqref{PB}.
A further observation is that if $f$ is a solution to \eqref{PB} then, given $\lambda>0$, also the function $f_\lambda$ with
\[
f_\lambda(t,x):=\lambda^{-1}f(\lambda^3 t, \lambda x),
\]
is a solution to \eqref{PB}. 
Since
\[
\Big\|\frac{d}{dx}f_\lambda(t)\Big\|_\infty=\Big\|\frac{d}{dx}f(\lambda^3 t)\Big\|_\infty\qquad\text{and}\qquad \|f_\lambda(t)\|_{{\dot{H}}^{3/2}}=\|f(\lambda^3 t)\|_{{\dot{H}}^{3/2}},
\]
where $\|\cdot\|_{{\dot{H}}^{3/2}}$ is the homogeneous Sobolev norm, we identify~${\rm BUC}^1(\R)$ and $H^{3/2}(\mathbb{R})$ as critical spaces for~\eqref{PB}.
In Theorem~\ref{MT1} we establish the well-posedness of~\eqref{PB} together with a parabolic smoothing property in all subcritical Sobolev spaces $H^r(\R)$ with $r\in(3/2,2)$.
With respect to this point we mention that all previous existence results  in the setting of classical solutions \cite{CHF96, ES98, ES96a, PS16,GR22, HRW21} consider  initial data
  with at least ${\rm C^2}$-regularity.

The main result of this paper is the following theorem.

\begin{thm}\label{MT1}
Let  $r\in(3/2,2)$ and choose $\ov r\in(3/2,r).$ Then,  given $f_0\in H^r(\R)$, there exists a unique maximal solution $f:=f(\,\cdot\,; f_0)$ to \eqref{PB} such that
\begin{equation*}  
\begin{aligned}
 &f\in {\rm C}([0,T^+),H^r(\mathbb{R}))\cap {\rm C}((0,T^+), H^{{\ov r}+1}(\mathbb{R}))\cap {\rm C}^1((0,T^+), H^{\ov r-2}(\mathbb{R})), \\
 &f(t)\in H^4(\R)\quad\text{for $t\in(0,T^+)$},\\
 &{u^\pm(t)\in{\rm C}^2(\0^\pm(t))\cap {\rm UC}^1(\0^\pm(t))}\quad\text{for $t\in(0,T^+)$},\\
 &\p_{\nu_{\G(t)}} u^\pm(t)\circ{\Xi_{\G(t)}}=(1+(f(t))'^2)^{-1/2}(\phi^\pm(t))' \quad\text{for $t\in(0,T^+)$ and some~${\phi^\pm(t) \in H^2(\R)},$}
\end{aligned}
  \end{equation*}
  where $T^+=T^+(f_0)\in(0,\infty]$ is the maximal existence time and $\Xi_{\G(t)}:\R\to \G(t)$ is defined by~${\Xi_{\G(t)}(x)=(x,f(t,x)).}$ 
Moreover,  $[(t,f_0)\mapsto f(t;f_0)]$ defines a semiflow on $H^r(\R)$ which is smooth in the open set
  \[
\{(t,f_0)\,:\, f_0\in H^r(\R),\, 0<t<T^+(f_0)\}\subset \R\times H^r(\R)  
  \]
  and
    \begin{equation}\label{eq:fg}
 f\in {\rm C}^\infty((0,T^+)\times\R,\R)\cap {\rm C}^\infty ((0, T^+),  H^k(\R))\quad \text{for all $k\in\N$}.
  \end{equation}
 \end{thm}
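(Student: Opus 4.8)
The plan is to use potential theory to recast \eqref{PB} as an autonomous quasilinear parabolic evolution problem for the interface $f$ alone, then to solve that problem by the abstract theory of quasilinear parabolic equations, and finally to bootstrap the regularity by means of a parameter trick. For the reduction, recall from Proposition~\ref{P:1} that $u^\pm(t)$ is uniquely determined by $f(t)$; I would make this quantitative by representing $u^\pm(t)$ through layer potentials over $\G(t)$ with densities $\phi^\pm(t)$, pulled back to $\R$ along $\Xi_{\G(t)}$. Imposing the Dirichlet condition $u^\pm=\kappa_{\G(t)}$ and the decay at infinity turns the elliptic part of \eqref{PB1} into a system of singular integral equations for the densities, and after inverting an appropriate operator one expresses the jump $[\p_{\nu_{\G(t)}}u(t)]$ as a nonlocal operator acting on the curvature. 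Since for a graph $V(t)\circ\Xi_{\G(t)}=\p_t f(t)/(1+f'(t)^2)^{1/2}$, the last line of \eqref{PB1} then takes the form
\[
\p_t f=\Phi(f)\big[\kappa(f)\big]=:\mathcal{F}(f),\qquad f(0)=f_0,
\]
where $\kappa(f)=\big(f'(1+f'^2)^{-1/2}\big)'$ is the curvature and $\Phi(f)$ is assembled from compositions and inverses of Cauchy- and Hilbert-transform-type operators in $f$; in particular $\mathcal{F}$ is third order in $f$.

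The analytic heart of the argument is to show that the singular integral operators occurring in $\Phi$ are bounded on the relevant Sobolev scale (in particular $H^{\ov r}(\R)\to H^{\ov r}(\R)$), that they depend real-analytically on $f\in H^r(\R)$, and that the operator to be inverted is indeed invertible; this is exactly where the subcriticality $r>3/2$ enters, since then $f'\in H^{r-1}(\R)$ with $r-1>1/2$. I would then compute the Fréchet derivative $\p\mathcal{F}(f_0)\in\mathcal{L}\big(H^{\ov r+1}(\R),H^{\ov r-2}(\R)\big)$ and, after localization and freezing of coefficients, identify its principal part with $-a(\,\cdot\,)(-\p_x^2)^{3/2}$ for a bounded, strictly positive function $a$ depending only on $f_0$ and $f_0'$ — in agreement with the fact that at the flat interface the flow linearizes to $\p_t f=-2(-\p_x^2)^{3/2}f$. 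Fourier-multiplier and perturbation arguments then show that $\p\mathcal{F}(f_0)$, and more generally the frozen operators $A(g)\in\mathcal{L}(H^{\ov r+1},H^{\ov r-2})$ for $g$ near $f_0$, generate analytic semigroups on $H^{\ov r-2}(\R)$ with domain $H^{\ov r+1}(\R)$. Hence \eqref{PB} is equivalent to a quasilinear parabolic problem $\p_t f=A(f)f+\Psi(f)$ with $A$ and $\Psi$ locally Lipschitz, for which $H^r(\R)$ is an admissible initial space since $\ov r-2<r<\ov r+1$.

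With these preliminaries in place, the abstract theory of quasilinear parabolic evolution equations yields, for each $f_0\in H^r(\R)$, a unique maximal solution with the stated continuity and $C^1$ regularity in time, the parabolic smoothing $f(t)\in H^{\ov r+1}(\R)$ for $t>0$, the semiflow property, and smooth dependence of $f(t;f_0)$ on $(t,f_0)$ on the set $\{0<t<T^+(f_0)\}$. Reconstructing $u^\pm$ from $f$ via the layer potentials, together with interior elliptic regularity, gives the asserted regularity of $u^\pm$ and the representation of $\p_{\nu_{\G(t)}}u^\pm$ with $(\phi^\pm)'\in H^2(\R)$; one further bootstrap gives $f(t)\in H^4(\R)$. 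To reach the full smoothing \eqref{eq:fg} I would run a parameter trick based on the invariances recorded in the introduction: for $\lambda>0$ and $\mu\in\R$ the function $f_{\lambda,\mu}(t,x):=\lambda^{-1}f(\lambda^3t,\lambda x+\mu)$ again solves \eqref{PB}, with datum $f_{0,\lambda,\mu}(x):=\lambda^{-1}f_0(\lambda x+\mu)$; since for $t>0$ the solution already lies in a space strictly better than $H^r(\R)$, the map $(\lambda,\mu)\mapsto f_{0,\lambda,\mu}$ is smooth into $H^r(\R)$ near $(1,0)$, and differentiating the smooth-dependence statement in $(\lambda,\mu)$ at $(1,0)$ produces extra spatial and parabolic regularity of $f$. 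Iterating this while restarting at positive times gives $f(t)\in H^k(\R)$ for every $k$ and every $t>0$, whence \eqref{eq:fg} by Sobolev embedding.

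The main obstacle, I expect, is the analytic step above: establishing the real-analytic dependence and the sharp Sobolev estimates for the layer-potential operators at the low regularity $r>3/2$, and — more delicately — pinning down the principal symbol of the linearization so as to verify the parabolic sign condition $a>0$. Once this is in hand, everything downstream is a fairly standard application of the abstract machinery and the parameter trick.
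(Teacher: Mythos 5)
Your proposal follows essentially the same route as the paper: the potential-theoretic reduction of \eqref{PB} to a quasilinear evolution equation for $f$, the generator property obtained by localization and comparison with the Fourier multiplier of symbol $2|\xi|^3$ (with positive frozen coefficients built from $(1+f'^2)^{-3/2}$), the application of abstract quasilinear parabolic theory in the interpolation scale $H^{\ov r-2}\subset H^{\ov r}\subset H^r\subset H^{\ov r+1}$, and a parameter trick for \eqref{eq:fg}. One structural remark: it is cleaner to prove the generation property directly for the quasilinear operator $\Phi(f)\in\kL(H^{r+1},H^{r-2})$, with the curvature replaced by its quasilinear realization $\kappa(f)[h]=h''/(1+f'^2)^{3/2}$, rather than for the Fr\'echet derivative of the full nonlinearity $\mathcal F(f)=\Phi(f)[\kappa(f)]$; the latter is only defined for $f\in H^{\ov r+1}$, and a fully nonlinear framework would not admit initial data in $H^r$ with $r<2$, which is precisely the point of the theorem. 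Your final formulation $\p_t f=A(f)f+\Psi(f)$ lands in the right place, but the detour is unnecessary.

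Two genuine gaps remain. First, the abstract theorem you invoke gives uniqueness only within the class of solutions that are additionally H\"older continuous at $t=0$ with values in $H^{\ov r}(\R)$, whereas the theorem asserts uniqueness among all solutions with the stated regularity ${\rm C}([0,T^+),H^r)\cap{\rm C}((0,T^+),H^{\ov r+1})\cap{\rm C}^1((0,T^+),H^{\ov r-2})$. Closing this requires an extra a priori argument showing that every such classical solution automatically lies in ${\rm C}^\zeta([0,T],H^{\ov r})$; the paper does this by bounding $\|\Phi(f(t))[f(t)]\|_{H^{r-3}}$ uniformly on $[0,T]$ (via a duality estimate for $(\pm1-\bA(f)^*)^{-1}[\kappa(f)]$ in $H^{r-2}$) and then interpolating between $H^{r-3}$ and $H^r$. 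Your proposal does not address this step. Second, the parameter trick as you state it is not quite right: a time-independent translation $x\mapsto\lambda x+\mu$ gives no new information, since differentiating $\mu\mapsto f_0(\cdot+\mu)$ in $H^r$ already presupposes $f_0\in H^{r+1}$; the standard device uses the time-dependent change of variables $x\mapsto x+t\mu$ (together with the time scaling $t\mapsto\lambda t$), so that differentiating the smooth semiflow in $(\lambda,\mu)$ at $(1,0)$ produces $t\p_t f$ and $t\p_x f$ and hence a genuine gain in joint space-time regularity. With these two points repaired, the argument matches the paper's proof.
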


In Theorem~\ref{MT1} and below we let  $(\cdot)'$ denote the spatial derivative $d/dx$. 

The strategy to prove Theorem~\ref{MT1} consists in several steps. 
To begin with, we first prove that if $f(t)$ is known and belongs to $H^4(\R)$, then the first three equations of \eqref{PB1} identify the functions $u^\pm(t)$ uniquely, see Proposition~\ref{P:1}.
Furthermore, we can also represent the right side of \eqref{PB1}$_4$ in terms of certain singular integral operators which involve only the function $f(t,\cdot)$. 
In this way we reformulate the problem  as an evolution problem with only~$f$ as unknown, see~\eqref{NNEP}.
In the proof of Proposition~\ref{P:1} we rely on potential theory and some formulas, see Lemma~\ref{L:MP0-}~(iv), that relate the derivatives of certain  
singular integral operator evaluated at some density $\beta$ to the
$L_2$-adjoints of these operators evaluated  $\beta'$,  which have been used already in the context of the Muskat problem in~\cite{DGN23, MM23}.
Thanks to these formulas, we may formulate~\eqref{PB}, see \eqref{NNEP} in Section~\ref{Sec:3.1}, as an evolution problem in~$H^{r-2}(\R)$, $r\in(3/2,2)$, 
with nonlinearities which are expresses as a derivative.
Then, using a direct localization argument, we show in Section~\ref{Sec:3.2} that the problem is of parabolic type by identifying the right side of \eqref{NNEP} as the generator of an analytic semigroup.
The proof of the main result is established in Section~\ref{Sec:3.3} and relies on the quasilinear parabolic theory presented in~\cite{Am93, MW20}.

\subsection{Notation}\label{Sec:1.1} 
Given   Banach spaces $E_1$ and $E_0,$ we define $\kL(E_1,E_0)$ as the space of bounded linear operators from $E_1$ to $E_0$   and   $\kL(E_0):=\kL(E_0,E_0)$.
Moreover,  ${\rm Isom}(E_1,E_0) $ is the open subset of  $\kL(E_1,E_0)$ which consists  of isomorphisms and~${{\rm Isom}(E_0):= {\rm Isom}(E_0,E_0)}$. 
Furthermore $\kL^k_{\rm sym}(E_1,E_0),$ $k\geq1$, is the space of $k$-linear, bounded, and symmetric operators~${T:\;E_1^k\to E_0}$.
The set of all locally Lipschitz continuous mappings from~$E_1$ to~$E_0$ is denoted by ${\rm C}^{1-}(E_1,E_0)$  and  ${\rm C}^{\infty}(\cO,E_0)$ is the  
set which consists only of smooth mappings from an open set
 $\cO\subset E_1$ to $E_0$.
  
  If $E_1$ is additionally densely embedded in $E_0$, we   set (following~\cite{Am95})
\begin{equation*} 
\kH(E_1,E_0):=\{A\in\kL(E_1,E_0)\,:\, \text{$-A$ generates an analytic semigroup in $\kL(E_0)$}\}.
\end{equation*}  

Given a Banach space $E$, an interval $I\subset \R$, $n\in\N$, and~${\gamma\in(0,1)}$, we define ${\rm C}^{n}(I,E)$ as the set of all $n$-times continuously differentiable functions and 
${\rm C}^{n+\gamma}(I,E)$ is its  subset consisting of those functions which posses a locally $\gamma$-H\"older continuous $n$th derivative.
Moreover,  ${\rm BUC}^{n}(I,E)$  is the Banach space of functions with bounded and uniformly continuous derivatives up to order~$n$ and ${{\rm BUC}^{n+\gamma}(I,E)}$
 denotes its subspace   which consists  of those functions 
which have a  uniformly $\gamma$-H\"older continuous $n$th derivative. 
We also set~${{\rm BUC}^{\infty}(I,E)=\cap_{n\in\N} {\rm BUC}^{n}(I,E).}$
Finally, if $\0\subset\R^2$ is open and $n\in\N$, then~${\rm UC}^{n}(\0,E)$  is the set of functions with  uniformly continuous derivatives up to order~$n$.

 \section{Solvability of some boundary value problems}\label{Sec:2}

Our strategy to solve  \eqref{PB} is to reformulate   this model as  an evolution problem for the function~$f$  only.
To this end, we first solve via the method of potentials, for each given function~${f\in H^4(\R)},$ the (decoupled) boundary value problems for $u^+$ and $u^-$ given by the systems
\begin{equation}\label{PB1fixed}
\left.
\arraycolsep=1.4pt
\begin{array}{rcllll}
\Delta u^\pm&=&0&\quad\text{in $\Omega^\pm$,}\\[1ex]
u^\pm&=& \kappa_{\Gamma}&\quad\text{on $\Gamma$,}\\[1ex]
\nabla u^\pm &\to&0&\quad\text{for $|(x,y)|\to\infty$,}
\end{array}
\right\}
\end{equation}
where
\[
\0^\pm=\{(x,y)\in\R^2\,:\, y\gtrless f(x)\}\qquad\text{and}\qquad \G :=\p\0^\pm=\{(x, f(x)\,:\, x\in\R)\}.
\]
Below ${\nu_\G}$ is the outward unit normal at $\G$ which points into $\0^+$.
The corresponding  existence and uniqueness result is provided in Proposition~\ref{P:1} below.
Before stating this result we first introduce some notation.
Observe that~$\G$ is the image of the diffeomorphism $\Xi_\G:\R\to\G$ defined by~${\Xi(x):=(x,f(x))}$ for~$x\in\R$.
Then, the pulled-back curvature $\kappa(f):=\kappa_\G\circ\Xi_\G$ is given by the relation 
\begin{equation}\label{formulae}
\kappa(f):=\kappa_\G\circ\Xi_\G:=\Big(\frac{f'}{(1+f'^2)^{1/2}}\Big)'\quad\text{on $\R.$}
\end{equation}

Moreover, given functions $w^\pm\in {\rm C}(\overline{\0^\pm}),$ we set
 \begin{equation}\label{defjump}
 [w](x,f(x)):=w^+(x,f(x))-w^-(x,f(x)).
\end{equation}

\subsection{Some singular integral operators}\label{Sec:2.1}
We now introduce some singular integral operators which are used when solving~\eqref{PB1fixed}.
Given $f\in W^1_\infty(\R)$,  we set
\begin{equation}\label{OpAB}
 \begin{aligned}
 \bA(f)[\alpha](x)&:=\frac{1}{\pi}\PV\int_\R\frac{f'(x)-(\delta_{[x,s]}f)/s}{1+\big[(\delta_{[x,s]}f)/s\big]^2}\frac{\alpha(x-s)}{s}\, {\rm d}s,\\[1ex] 
 \bB(f)[\alpha](x)&:=\frac{1}{\pi}\PV\int_\R\frac{1+f'(x)(\delta_{[x,s]}f)/s}{1+\big[(\delta_{[x,s]}f)/s\big]^2}\frac{\alpha(x-s)}{s}\, {\rm d}s
 \end{aligned}
 \end{equation}
for $\alpha\in L_2(\R),$ where  $\PV$ is the  principal value  and  
 \[
 \delta_{[x,s]}f:=f(x)-f(x-s),\qquad x,\, s\in\R.
 \]
 Lemma~\ref{L:MP0}~(i) below ensures that these singular integral operators belong to  $\kL(L_2(\R))$.
 Their $L_2$-adjoints  are given by the relations
 \begin{equation}\label{adj}
\begin{aligned}
\bA(f)^*[\alpha](x)&=\frac{1}{\pi}\PV\int_\R\frac{\big(\delta_{[x,s]}f\big)/s-f'(x-s)}{1+\big[\big(\delta_{[x,s]}f\big)/s\big]^2}\frac{\alpha(x-s)}{s}\, {\rm d}s,\\[1ex]
\bB(f)^*[\alpha](x)&=-\frac{1}{\pi}\PV\int_\R\frac{1+f'(x-s)\big(\delta_{[x,s]}f\big)/s}{1+\big[\big(\delta_{[x,s]}f\big)/s\big]^2}\frac{\alpha(x-s)}{s}\, {\rm d}s.
\end{aligned}
\end{equation}
An important observation is that the operators defined in \eqref{OpAB}-\eqref{adj} can be represented in terms of a  family of singular integral operators $\{B_{n,m}^0(f)\,:\, n,\, m\in\N\}$ which we now introduce.
Given~${n,\,m\in\N}$ and  Lipschitz continuous  mappings~${a_1,\ldots, a_{m},\, b_1, \ldots, b_n:\mathbb{R}\to\mathbb{R}}$,  we set
\begin{equation}\label{BNM}
B_{n,m}(a_1,\ldots, a_m)[b_1,\ldots,b_n,\alpha](x):=
\frac{1}{\pi}\PV\int_\mathbb{R}  \cfrac{\prod_{i=1}^{n}\big(\delta_{[x,s]} b_i\big) /y}{\prod_{i=1}^{m}\big[1+\big[\big(\delta_{[x,s]}  a_i\big) /s\big]^2\big]}\frac{\alpha(x-s)}{s}\, {\rm d}s 
\end{equation}
for $\alpha\in L_2(\R)$.
In particular, if    ${f:\mathbb{R}\to\mathbb{R}}$  is Lipschitz continuous  we use the short notation
\begin{equation}\label{defB0}
B^0_{n,m}(f) :=B_{n,m}(f,\ldots, f)[f,\ldots,f,\cdot].
\end{equation} 
These operators have been defined in the context of the Muskat problem in \cite{MBV18}.
It is now a straight forward consequence of \eqref{OpAB}-\eqref{defB0} to observe that 
\begin{equation}\label{adjBNM}
\begin{aligned}
&\bA(f)[\alpha] =f'B_{0,1}^0(f)[\alpha]-B_{1,1}^0(f)[\alpha],\qquad&&\bA(f)^*[\alpha] =B_{1,1}^0(f)[\alpha]-B_{0,1}^0(f)[f'\alpha],\\[1ex]
&\bB(f)[\alpha] =B_{0,1}^0(f)[\alpha]+f'B_{1,1}^0(f)[\alpha],\qquad &&\bB(f)^*[\alpha] =-B_{0,1}^0(f)[\alpha]-B_{1,1}^0(f)[f'\alpha].
\end{aligned}
\end{equation}
In view of the representation \eqref{adjBNM} several mapping properties for the operators introduced in  \eqref{OpAB}-\eqref{adj} can be derived from the following result.

\begin{lemma}\label{L:MP0} Let  $n,\,m \in\N$. 
\begin{itemize}
\item[(i)] Let $a_1,\ldots, a_m:\R\to\R$ be Lipschitz continuous mappings.
Then, there exists a positive constant~$C=C(n,\, m,\,\max_{i=1,\ldots, m}\|a_i'\|_{\infty} )$
such that for all  Lipschitz continuous functions~${b_1,\ldots, b_n:\R\to\R}$ we have
\[
\|B_{n,m}(a_1,\ldots, a_m)[b_1,\ldots,b_n,\,\cdot\,]\|_{\kL(L_2(\R))}\leq C\prod_{i=1}^{n} \|b_i'\|_{\infty}.
\]
 Moreover, $B_{n,m}\in {\rm C}^{1-}(W^1_\infty(\R)^{m},\kL^n_{\rm sym}(W^1_\infty(\R),\kL(L_2(\R)))).$
 \item[(ii)] Given $k\geq 2$, it holds that  $B_{n,m} \in {\rm C}^{1-}(H^k(\R)^m,{\kL}^n_{\rm sym}(H^k(\R),\kL(H^{k-1}(\R)))).$
  \item[(iii)]  Given $r\in (3/2,2)$, it holds that $[f\mapsto B_{n,m}^0(f)]\in {\rm C}^{\infty}(H^r(\R),{\kL}(H^{r-1}(\R))).$
  \item[(iv)]  Let $r\in (3/2,2)$ and $a_1,\ldots, a_m\in H^r(\R)$ be given. Then,  there exists  a  positive constant~$C=C(n,\, m,\,\max_{i=1,\ldots, m}\|a_i \|_{H^r} )$  such that 
  for all~${b_1,\ldots, b_n\in H^{r}(\R)}$  we have 
  \[
\|B_{n,m}(a_1,\ldots, a_m)[b_1,\ldots,b_n,\,\cdot\,]\|_{\kL(H^{r-2}(\R))}\leq C\prod_{i=1}^{n} \|b_i \|_{H^r}.
\]
\end{itemize} 
\end{lemma}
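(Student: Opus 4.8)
The plan is to reduce everything to a single quantitative estimate on the basic operator $B_{n,m}$ in $\kL(L_2(\R))$, and then obtain (i)--(iv) by combining that estimate with suitable interpolation/product arguments and the smoothness machinery for parameter-dependent families of operators. For part~(i), the starting point is to observe that $B_{n,m}(a_1,\ldots,a_m)[b_1,\ldots,b_n,\cdot]$ is a singular integral operator whose kernel is of Calder\'on commutator type: the factors $(\delta_{[x,s]}b_i)/s$ are bounded by $\|b_i'\|_\infty$ and, more importantly, the denominator $\prod_{i=1}^m(1+[(\delta_{[x,s]}a_i)/s]^2)$ is a smooth, bounded, bounded-below function of the difference quotients $(\delta_{[x,s]}a_i)/s$. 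One expands this smooth function as (a convergent combination handled by) Coifman--McIntosh--Meyer-type bounds for the Calder\'on commutators, or one invokes directly the boundedness of the Cauchy integral along Lipschitz graphs; in either case the operator norm is controlled by $C(n,m,\max_i\|a_i'\|_\infty)\prod_i\|b_i'\|_\infty$. For the regularity statement in~(i), $B_{n,m}$ is multilinear and continuous in the $b_i$ (linear, with the bound just derived), and the dependence on the $a_i$ is locally Lipschitz: writing the difference of two denominators telescopically and using that $t\mapsto 1/(1+t^2)$ has bounded derivative on bounded sets, each replacement $a_i\rightsquigarrow\widetilde a_i$ costs a factor $\|a_i'-\widetilde a_i'\|_\infty$ times another $B_{n',m'}$-type operator, which is again bounded by part~(i). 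This gives $B_{n,m}\in{\rm C}^{1-}(W^1_\infty(\R)^m,\kL^n_{\rm sym}(W^1_\infty(\R),\kL(L_2(\R))))$.

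For part~(ii), with $k\ge2$ one differentiates the defining integral: since $H^{k-1}(\R)$ is a Banach algebra and $H^k(\R)\hookrightarrow W^1_\infty(\R)$, one commutes $\partial_x^{k-1}$ past the operator using the product rule and the identity $\partial_x\big(\delta_{[x,s]}g\big)=\delta_{[x,s]}g'$, producing finitely many terms each of which is a $B_{n',m'}$ applied to $H^{k-1}$-functions times (by part~(i) applied with $L_2$-data) a bounded operator; the algebra property of $H^{k-1}$ closes the estimate, and the locally Lipschitz dependence follows as in~(i). Part~(iii) is the smoothness statement: for $B^0_{n,m}(f)=B_{n,m}(f,\ldots,f)[f,\ldots,f,\cdot]$ the map $f\mapsto B^0_{n,m}(f)$ is, on $H^r(\R)$ with $r>3/2$ (so $H^r\hookrightarrow W^1_\infty$), a composition of the multilinear real-analytic substitution $t\mapsto 1/(1+t^2)$ with the bounded bilinear ``difference-quotient'' map and the $L_2$-boundedness of the Cauchy kernel; expanding $1/(1+t^2)$ as a power series (valid since the relevant difference quotients are bounded by $\|f'\|_\infty$) exhibits $B^0_{n,m}(f)$ as a locally uniformly convergent series of multilinear bounded operators in $f$, hence real-analytic, in particular ${\rm C}^\infty$, from $H^r(\R)$ to $\kL(H^{r-1}(\R))$. (Here one also needs $H^{r-1}$ mapping properties, which come from interpolating the $L_2$ bound of part~(i) with the $H^1$ bound implicit in part~(ii) for $k=2$, together with the fact that $r-1\in(1/2,1)$.)

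Finally, part~(iv) is the $H^{r-2}$ estimate, and this is the step I expect to be the main obstacle, because $r-2\in(-1/2,0)$ is a space of \emph{negative} order and one cannot simply ``differentiate and use an algebra property.'' The strategy is duality: $H^{r-2}(\R)=(H^{2-r}(\R))'$ with $2-r\in(0,1/2)$, so it suffices to bound $B_{n,m}(a_1,\ldots,a_m)[b_1,\ldots,b_n,\cdot]^*$ in $\kL(H^{2-r}(\R))$; the adjoint is again a singular integral operator of exactly the same structural type (compare the computation \eqref{adj}--\eqref{adjBNM}), with the roles of the arguments mildly permuted and with $b_i$ appearing now under a shift $b_i\mapsto b_i(x-s)$, which is harmless. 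Thus one reduces to proving: for $s\in(0,1/2)$, $B_{n,m}$ maps $H^s(\R)\to H^s(\R)$ with the stated bound. For $s\in(0,1/2)$ the $H^s$-norm admits the Gagliardo--Slobodeckij characterization $\|g\|_{H^s}^2\sim\|g\|_{L_2}^2+\iint |g(x)-g(x')|^2/|x-x'|^{1+2s}\,dx\,dx'$; inserting the integral representation of $B_{n,m}$ and estimating the finite difference of the kernel in $x$ (which, because every factor is either Lipschitz, i.e.\ $(\delta_{[x,s]}b_i)/s$, or a smooth bounded function of such, loses at most one derivative, compensated by the extra $H^r$-regularity $r-1>1/2$ of the $a_i,b_i$) yields the Gagliardo seminorm bound; the $L_2$-part is controlled by part~(i). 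Assembling the adjoint computation with this $H^s$-bound and dualizing gives~(iv); the locally Lipschitz-in-$(a_1,\ldots,a_m)$ character is then inherited exactly as in~(i)--(ii) by the telescoping argument.
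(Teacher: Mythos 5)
The paper itself does not prove this lemma: it delegates (i) to \cite[Lemma~3.1]{MBV18}, (ii) to \cite[Lemma~4.3]{MP2022}, (iii) to \cite[Appendix~C]{MP2021}, and (iv) to \cite[Lemma~2.5]{MM21}. Your overall architecture (Cauchy integral on Lipschitz graphs for (i), differentiation of the kernel for (ii), smoothness of the parameter dependence for (iii), duality into $H^{2-r}(\R)$ for (iv)) does match the structure of those cited proofs, and your sketches of (i) and (iv) are consistent with them. However, two of your steps would fail as written.

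In part (ii), differentiating the kernel produces, besides $B_{n,m}(a_1,\ldots,a_m)[b_1,\ldots,b_n,\alpha']$, terms of the form $B_{n,m}(a_1,\ldots,a_m)[b_1,\ldots,b_i',\ldots,b_n,\alpha]$ and $B_{n+2,m+1}(a_1,\ldots,a_m,a_i)[b_1,\ldots,b_n,a_i,a_i',\alpha]$, in which one numerator slot is occupied by the \emph{derivative} of an $H^k$-function, hence by a function that lies only in $H^{k-1}(\R)$ and is not Lipschitz (for $k=2$ it is merely in $H^1(\R)\hookrightarrow L_\infty(\R)\cap L_2(\R)$). Part (i) requires every numerator argument to be Lipschitz, so ``part (i) applied with $L_2$-data'' does not cover these terms, and the algebra property of $H^{k-1}(\R)$ cannot rescue them. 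What is needed is the complementary estimate in which one numerator argument is measured in $L_2$ and the density in $L_\infty$ (this is \cite[Lemma~3.2]{MBV18} and is the actual engine behind \cite[Lemma~4.3]{MP2022}); it does not follow from (i) and must be established separately.

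In part (iii), you justify smoothness by ``expanding $1/(1+t^2)$ as a power series (valid since the relevant difference quotients are bounded by $\|f'\|_\infty$).'' The Maclaurin series of $t\mapsto 1/(1+t^2)$ has radius of convergence $1$, while the difference quotients $(\delta_{[x,s]}f)/s$ are bounded only by $\|f'\|_\infty$, which is not assumed to be smaller than $1$; the series therefore diverges in general, and this argument only proves smoothness on the ball $\{\|f'\|_\infty<1\}$. The correct route, as in \cite[Appendix~C]{MP2021}, is to compute the Fr\'echet derivatives of $f\mapsto B^0_{n,m}(f)$ directly---each candidate derivative is again a finite linear combination of operators $B_{n',m'}$ evaluated at $(f,\ldots,f)$ and at the increments---and to verify the Taylor remainder estimates by means of bounds of type (i)--(ii); no global power series in the difference quotients is available.
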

\begin{proof}
The property  (i) is established in \cite[Lemma 3.1]{MBV18}. 
The claim (ii) is proven for $k=2$ in   \cite[Lemma 4.3]{MP2022} and the case $k\geq 3$ follows from this result via induction.
Moreover, (iii) is established in \cite[Appendix C]{MP2021} and (iv) in \cite[Lemma 2.5]{MM21}.
\end{proof}

The next lemma collects some  important properties of the operators defined in \eqref{OpAB}-\eqref{adj}.

\begin{lemma}\label{L:MP0-} Let $\lambda\in\R\setminus(-1,1).$ 
\begin{itemize}
\item[(i)] If $f\in{\rm BUC}^1(\R)$,  then $\lambda-\bA(f),\, \lambda-\bA(f)^*\in{\rm Isom}(L_2(\R))$. 
\item[(ii)] If $f\in H^r(\R)$, $r\in(3/2,2)$,  then $\lambda-\bA(f),\, \lambda-\bA(f)^*\in{\rm Isom}(H^{r-1}(\R))$.  
\item[(iii)] If $f\in H^2(\R)$, then $\lambda-\bA(f),\, \lambda-\bA(f)^*\in{\rm Isom}(H^{1}(\R))$.  
\item[(iv)] If $f\in H^2(\R)$ and $\beta\in H^1(\R)$, then $\bA(f)^*[\beta]$ and $\bB(f)^*[\beta]$  belong to $H^1(\R)$ with 
\[
(\bA(f)^*[\beta])'=-\bA(f)[\beta']\qquad\text{and}\qquad (\bB(f)^*[\beta])'=-\bB(f)[\beta'].
\]
\item[(v)] If $f\in H^3(\R)$, then $ \lambda-\bA(f)^*\in{\rm Isom}(H^{2}(\R))$.  
\end{itemize}
\end{lemma}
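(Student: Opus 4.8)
The plan is to bootstrap from item (ii), which already gives $\lambda - \bA(f)^* \in {\rm Isom}(H^{r-1}(\R))$ for $r \in (3/2,2)$ — in particular for some fixed such $r$ — together with item (iii), which gives invertibility on $H^1(\R)$. Since $f \in H^3(\R) \hookrightarrow H^2(\R)$, item (iii) already supplies $\lambda - \bA(f)^* \in {\rm Isom}(H^1(\R))$; the task is to upgrade this to $H^2(\R)$. The natural mechanism is the differentiation formula in item (iv): for $\beta \in H^2(\R)$ we have $\beta' \in H^1(\R)$, and (iv) (applied with the density $\beta'$, which requires $\beta' \in H^1(\R)$ and hence $\beta \in H^2$, consistent with $f \in H^3 \hookrightarrow H^2$) gives $(\bA(f)^*[\beta])' = -\bA(f)[\beta']$. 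To make sense of the right-hand side I need $\bA(f) \in \kL(H^1(\R))$, which follows from Lemma~\ref{L:MP0}(ii) with $k=2$ via the representation \eqref{adjBNM}, since $f \in H^3 \hookrightarrow H^2$. Thus $\bA(f)^*$ maps $H^2(\R)$ into $H^2(\R)$, and similarly $\bA(f)$ maps $H^2(\R)$ into $H^2(\R)$ (again by Lemma~\ref{L:MP0}(ii) with $k=3$, using $f \in H^3$).

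The key identity is then the following intertwining relation at the level of the derivative operator $\partial = (\cdot)'$: for $\beta \in H^2(\R)$,
\[
\bigl((\lambda - \bA(f)^*)[\beta]\bigr)' = \lambda \beta' - (\bA(f)^*[\beta])' = \lambda \beta' + \bA(f)[\beta'] = (\lambda + \bA(f))[\beta'].
\]
Wait — the sign: $(\lambda - \bA(f)^*)[\beta])' = \lambda\beta' - (\bA(f)^*[\beta])' = \lambda\beta' + \bA(f)[\beta']$, so the relevant operator on the derivative side is $\lambda + \bA(f)$, equivalently $-((-\lambda) - \bA(f))$; since $-\lambda \in \R \setminus (-1,1)$ as well, item (ii) (or (iii)) applies to $(-\lambda) - \bA(f)$, giving that $\lambda + \bA(f) \in {\rm Isom}(H^1(\R))$. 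Now I argue injectivity and surjectivity of $\lambda - \bA(f)^*$ on $H^2(\R)$ directly. For injectivity: if $\beta \in H^2(\R)$ with $(\lambda - \bA(f)^*)[\beta] = 0$, then a fortiori $\beta \in H^1(\R)$ and item (iii) forces $\beta = 0$. For surjectivity: given $g \in H^2(\R)$, by item (iii) there is a unique $\beta \in H^1(\R)$ with $(\lambda - \bA(f)^*)[\beta] = g$; it remains to show $\beta \in H^2(\R)$, i.e. $\beta' \in H^1(\R)$. Differentiating the equation (valid in, say, $L_2(\R)$ or $H^{-1}$, since all terms lie in $H^1$) and using (iv) gives $(\lambda + \bA(f))[\beta'] = g' \in H^1(\R)$; since $\lambda + \bA(f) \in {\rm Isom}(H^1(\R))$, we get $\beta' = (\lambda + \bA(f))^{-1}[g'] \in H^1(\R)$, hence $\beta \in H^2(\R)$ with a quantitative bound $\|\beta\|_{H^2} \lesssim \|g\|_{H^2}$. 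Thus $\lambda - \bA(f)^*$ is a continuous bijection $H^2(\R) \to H^2(\R)$, and the open mapping theorem (or the explicit bound just obtained) makes the inverse bounded, giving the claim.

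\textbf{Main obstacle.} The only genuinely delicate point is justifying that the differentiation in (iv) may be combined with the inverse from (iii) as above, i.e. that the elliptic-type regularity step "$(\lambda - \bA(f)^*)[\beta] \in H^2$ and $\beta \in H^1$ imply $\beta \in H^2$" is legitimate. This rests entirely on the commutator/differentiation identity of item (iv) and on $\bA(f)$ being bounded on $H^1(\R)$ when $f \in H^3(\R)$; both are already in hand (the latter from Lemma~\ref{L:MP0}(ii) through the representation \eqref{adjBNM}). One should also double-check that the hypothesis $f \in H^2(\R)$ in (iv) is met — it is, since $H^3 \hookrightarrow H^2$ — and that no additional regularity of $f$ beyond $H^3$ is silently used; the mapping $\bA(f) \in \kL(H^2(\R))$ needed to even state that $\bA(f)^*$ preserves $H^2$ is exactly the $k=3$ case of Lemma~\ref{L:MP0}(ii), which is why the hypothesis is $f \in H^3$ rather than $f \in H^2$. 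Everything else is a routine bijectivity-plus-bounded-inverse argument.
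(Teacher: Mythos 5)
Your argument for (v) is correct, and it rests on the same key mechanism as the paper's proof: the intertwining identity $((\lambda-\bA(f)^*)[\beta])'=(\lambda+\bA(f))[\beta']$ supplied by (iv), combined with the $H^1$-invertibility of both $\lambda-\bA(f)^*$ and $\lambda+\bA(f)$ from (iii) (the latter because $-\lambda\in\R\setminus(-1,1)$ as well). Where you diverge is the concluding step. The paper converts the identity into the a priori lower bound $2\|(\lambda-\bA(f)^*)[\alpha]\|_{H^2}^2\geq C\|\alpha\|_{H^2}^2$ and then invokes the method of continuity to obtain surjectivity; you instead prove bijectivity directly --- injectivity by restriction to $H^1$, and surjectivity by first solving $(\lambda-\bA(f)^*)[\beta]=g$ in $H^1$ via (iii) and then bootstrapping the $H^1$-solution to $H^2$ through the differentiated equation $(\lambda+\bA(f))[\beta']=g'$, using that the $L_2$- and $H^1$-inverses of $\lambda+\bA(f)$ are consistent on $H^1$ data. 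Your route is arguably more elementary, since it avoids setting up a continuous path of operators with a uniform estimate; the paper's route yields the quantitative lower bound as a byproduct, in the spirit of the resolvent estimates used later in Section~\ref{Sec:3}. One small inaccuracy that does not affect your proof: to see that $\bA(f)^*$ preserves $H^2(\R)$ you do not need $\bA(f)\in\kL(H^2(\R))$ (the $k=3$ case of Lemma~\ref{L:MP0}~(ii)); the identity from (iv) reduces this to $\bA(f)\in\kL(H^1(\R))$, which already holds for $f\in H^2(\R)$, so your closing remark that the $k=3$ case explains the hypothesis $f\in H^3(\R)$ overstates what is actually used.
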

\begin{proof}
The property (i) follows from \cite[Theorem~3.5]{MBV18}
and (ii) is established in \cite[Theorem~5]{AM22} and \cite[Propositin 3.4]{MM23}.
Moreover, the claim~(iii) is proven  in \cite[Proposition 3.6 and Lemma 3.8]{MBV18} and~(iv)  in \cite[Proposition 2.3]{MM23}.
The assertion  (v) is a consequence of (iii) and~(iv).
Indeed, given $f\in H^3(\R)$, $\lambda\in\R\setminus(-1,1),$ and~${\alpha\in H^2(\R)}$,  the properties (iii)-(iv) imply that  $(\lambda-\bA(f)^*)[\alpha] \in H^1(\R)$ with  
\[
((\lambda-\bA(f)^*)[\alpha])'=(\lambda+\bA(f))[\alpha'] \in H^1(\R).
\]
Hence, $(\lambda-\bA(f)^*)[\alpha] \in H^2(\R)$ and
\begin{align*}
2\|(\lambda-\bA(f)^*)[\alpha]\|_{H^2}^2&\geq\|(\lambda-\bA(f)^*)[\alpha]\|_{H^1}^2+\|((\lambda-\bA(f)^*)[\alpha])'\|_{H^1}^2\\[1ex]
&=\|(\lambda-\bA(f)^*)[\alpha]\|_{H^1}^2+\|(\lambda+\bA(f))[\alpha']\|_{H^1}^2\\[1ex]
&\geq C(\|\alpha\|^2_{H^1}+\|\alpha'\|^2_{H^1})\\[1ex]
&\geq C\|\alpha\|_{H^2}^2,
\end{align*}
the inequalities in the second last line of the formula (with a sufficiently small constant $C$ independent of $\lambda$ and $\alpha$) being a straightforward consequence of (iii).
The assertion (v) follows now from this estimate via the method of continuity \cite[Proposition I.1.1.1]{Am95}.
\end{proof}

\subsection{The solvability of the boundary value problems~\eqref{PB1fixed}}\label{Sec:2.2}

As a preliminary result we provide in Proposition~\ref{P:A1} the unique solvability of a transmission type boundary value problem which is used to establish the uniqueness claim in Proposition~\ref{P:1} below.

\begin{prop}\label{P:A1} Given $f\in H^3(\R)$ and $\phi\in H^2(\R)$, the boundary value problem
\begin{equation}\label{BVP}
\left.
\arraycolsep=1.4pt
\begin{array}{rcllll}
\Delta U^\pm&=&0&\quad\text{in $\Omega^\pm$,}\\[1ex]
[U]&=& 0&\quad\text{on $\Gamma$,}\\[1ex]
[\p_{\nu_\G} U]&=&\big((1+f'^2)^{-1/2}\phi'\big)\circ\Xi_\G^{-1}&\quad\text{on $\Gamma$,}\\[1ex]
\nabla U^\pm &\to&0&\quad\text{for $|(x,y)|\to\infty$,}\\[1ex]
\end{array}
\right\}
\end{equation}
 has a solution $(U^+,U^-)$ such that~${U^\pm\in{\rm C}^2(\0^\pm)\cap {\rm UC}^1(\0^\pm)}$.
 Moreover, the solution is, up to an additive constant, unique.
\end{prop}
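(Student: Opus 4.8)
\emph{Strategy.} The plan is to produce the solution via a single-layer potential over $\G$ — for which the trace across $\G$ is automatically continuous, so that $[U]=0$ holds for free, while the normal derivative jumps by exactly the prescribed density — and to obtain the uniqueness statement by showing that the difference of two solutions is an entire harmonic function whose gradient vanishes at infinity.

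\emph{Existence.} Since $\phi\in H^2(\R)$ we have $\phi'\in H^1(\R)$, and $\int_\R\phi'\,dx=0$ because $\phi$ vanishes at $\pm\infty$. Let $U$ be the single-layer potential over $\G$ with density $\phi'$ (with respect to $dx$), i.e., up to a fixed nonzero normalisation constant,
\[
U(\xi):=\frac1{2\pi}\int_\R\ln|\xi-(s,f(s))|\,\phi'(s)\,ds,\qquad\xi\in\R^2\setminus\G,
\]
and put $U^\pm:=U|_{\Omega^\pm}$; this is well defined for $\phi'\in H^1(\R)$ by the theory in the works cited for Lemma~\ref{L:MP0}. Then $U^\pm$ is harmonic, hence $U^\pm\in{\rm C}^2(\Omega^\pm)$, and the classical jump relations for the single-layer potential give $[U]=0$ on $\G$ together with $[\p_{\nu_\G}U]\circ\Xi_\G=(1+f'^2)^{-1/2}\phi'$ after fixing the normalisation, which are precisely the interface conditions in \eqref{BVP}. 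Pulling back $\nabla U^\pm$ to $\R$ via $\Xi_\G$ and recognising the resulting principal-value integrals in terms of \eqref{OpAB}--\eqref{defB0}, one computes that the pulled-back tangential and normal traces are
\[
(1+f'^2)^{1/2}(\p_{\tau_\G}U^\pm)\circ\Xi_\G=\tfrac12\bB(f)[\phi'],\qquad(1+f'^2)^{1/2}(\p_{\nu_\G}U^\pm)\circ\Xi_\G=-\tfrac12\bA(f)[\phi']\pm\tfrac12\phi',
\]
with $\tau_\G$ the unit tangent. Here the derivative structure of the data is crucial: since $f\in H^3(\R)$ and $\phi\in H^2(\R)$, Lemma~\ref{L:MP0-}(v) (with (iii)--(iv)) yields $\bA(f)^*[\phi],\,\bB(f)^*[\phi]\in H^2(\R)$, and Lemma~\ref{L:MP0-}(iv) then gives $\bA(f)[\phi']=-(\bA(f)^*[\phi])'\in H^1(\R)$ and $\bB(f)[\phi']=-(\bB(f)^*[\phi])'\in H^1(\R)$. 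Hence the traces of $\nabla U^\pm$ on $\G$ lie in $H^1(\R)\hookrightarrow{\rm BUC}(\R)$; combined with interior elliptic estimates and the standard continuity-up-to-the-boundary of single-layer potentials with $H^1$-densities, this gives $U^\pm\in{\rm UC}^1(\Omega^\pm)$. Finally, moving the derivative in $\phi'$ onto the logarithmic kernel (equivalently, the identities of Lemma~\ref{L:MP0-}(iv)) represents $U$ and $\nabla U$ by operators with kernels of order $|\xi-(s,f(s))|^{-1}$ and $|\xi-(s,f(s))|^{-2}$ applied to $\phi\in L_2(\R)$, whence $\nabla U^\pm\to0$ as $|\xi|\to\infty$. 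Thus $(U^+,U^-)$ solves \eqref{BVP}.

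\emph{Uniqueness.} Let $(U^+,U^-)$ solve \eqref{BVP} with $\phi=0$. Since $[U]=0$, the function $V$ with $V:=U^\pm$ on $\Omega^\pm$ is continuous on $\R^2$ and satisfies $V|_{\Omega^\pm}\in{\rm UC}^1(\Omega^\pm)$. Integrating by parts over $\Omega^+$ and $\Omega^-$ separately and using $[\p_{\nu_\G}V]=0$, one obtains $\int_{\R^2}\nabla V\cdot\nabla\psi\,d\xi=0$ for every $\psi\in{\rm C}^\infty_{\rm c}(\R^2)$, so $\Delta V=0$ in $\cD'(\R^2)$ and, by Weyl's lemma, $V$ is harmonic on $\R^2$. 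Then $\nabla V$ is an entire harmonic vector field vanishing at infinity, hence bounded, hence constant by Liouville's theorem; therefore $\nabla V\equiv0$, so $V$ — and with it $(U^+,U^-)$ — is constant. This is the asserted uniqueness up to an additive constant.

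\emph{Main obstacle.} The delicate point is the boundary regularity $U^\pm\in{\rm UC}^1(\Omega^\pm)$: one has to control $\nabla U^\pm$ uniformly up to the unbounded, merely $H^3$-regular interface $\G$. This is exactly where the fact that the Neumann data is a (normalised) derivative enters, as it lets Lemma~\ref{L:MP0-}(iv)--(v) transport one extra order of Sobolev regularity from $\phi$ to the traces of $\nabla U^\pm$, thereby compensating the loss incurred by differentiating the single-layer potential.
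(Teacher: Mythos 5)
Your proof follows essentially the same route as the paper: the paper's potential \eqref{u} is exactly your single-layer potential with density $\phi'$ after an integration by parts in $s$, the trace and jump formulas you quote agree with \eqref{nablau'} and the lemmas of \cite{BM22} that the paper invokes, and your uniqueness argument (harmonicity of the glued function across $\G$ via the two jump conditions, then Liouville for the gradient) is the same as the paper's holomorphic-function version. The one point to tighten is the very definition of $U$: for general $\phi\in H^2(\R)$ the integral $\int_\R\ln|\xi-(s,f(s))|\,\phi'(s)\,{\rm d}s$ need not converge absolutely (the kernel grows like $\ln|s|$ while $\phi'$ is merely in $H^1$), which is precisely why the paper writes the potential in the form \eqref{u} with the $s$-derivative moved onto the kernel — so either take that representation as the definition or renormalize explicitly using $\int_\R\phi'\,{\rm d}s=0$.
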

\begin{proof} We first prove uniqueness of solutions in the class described above.
Let therefore~$U$ be a solution to the homogeneous problem associated with \eqref{BVP} (that is with $\phi=0$).
Setting~${U:=U_+{\bf 1}_{\0^+}+ U_-{\bf 1}_{\0^-},}$ Stokes' theorem leads us to  the equation
  \[
\Delta U=0\quad\text{in\quad $\mathcal{D}'(\R^2).$}  
  \]
  Hence, $U$ is the real part of a holomorphic function $h:\mathbb{C}\to\mathbb{C}$.
Since $h'$ is also holomorphic and $h'=\nabla U$ is bounded and vanishes for $|(x,y)|\to\infty$, it follows that $h'=0$, meaning that~$U$ is constant in $\R^2$. 

In order to establish the existence of solutions, we set 
\begin{equation}\label{rvar}
r=(r_1,r_2)=(x-s,y-f(s)) \quad\text{for $s\in\R$ and $(x,y)\in\R^2\setminus\G.$}
\end{equation}
Defining $U:\R^2\setminus\G\to\R$ by the formula
\begin{equation}\label{u}
U(x,y):= \frac{1}{2\pi}\int_\R\frac{ r_1+f'(s)r_2}{|r|^2}\phi(s)\, {\rm d}s
\end{equation}
and setting $U^\pm:=U|_{\0^\pm}$, we next show that $(U^+,U^-)$ is a solution to \eqref{BVP} with the required properties.
To start, we note that
\[
U(x,y)=\int_\R K(x,y,s)\phi(s)\,{\rm d}s, \qquad\text{$(x,y)\in\R^2\setminus\G,$}
\]
and, for every $\alpha\in\N^2,$ we have $\partial^\alpha_{(x,y)} K(x,y,s)=O(s^{-1})$ for $|s|\to\infty$ and locally uniformly in~${(x,y)\in \R^2\setminus\Gamma}$.
This shows that $U$ is well-defined and that integration and differentiation with respect to $x$ and $y$ may be commuted.

Furthermore, we infer from \eqref{u}   and \cite[Lemma~A.1]{BM22}, that $U^\pm\in{\rm C}^\infty(\0^\pm)\cap {\rm UC}(\0^\pm) $ 
with~${[U]=0},$ the gradient $\nabla U=(\p_x U,\p_y U)$ being  given by the formula
\begin{equation}\label{nablau}
\nabla U(x,y)=\frac{1}{2\pi}\int_\R\frac{1}{|r|^4}\raisebox{1.1ex}{$(-f'(s)\;\;1)$}
  \begin{pmatrix}
    2r_1r_2 &r_2^2-r_1^2 \\[1ex]
   r_2^2-r_1^2 & -2r_1r_2
    \end{pmatrix}\phi(s)\, {\rm d}s, \qquad (x,y)\in\R^2\setminus\G.
\end{equation}
Using the  matrix identity
\[
\frac{1}{|r|^4}\raisebox{1.1ex}{$(-f'(s)\;\;1)$}
  \begin{pmatrix}
    2r_1r_2 &r_2^2-r_1^2 \\[1ex]
   r_2^2-r_1^2 & -2r_1r_2
    \end{pmatrix}=-\p_s\frac{(r_1,r_2)}{|r|^2}
\]
 together with integration by parts we obtain that 
\begin{equation}\label{nablau'}
\nabla U(x,y)=\frac{1}{2\pi}\int_\R\frac{(r_1,r_2)}{|r|^2}\phi'(s)\, {\rm d}s, \qquad (x,y)\in\R^2\setminus\G.
\end{equation}
Combining \eqref{nablau'} and  \cite[Lemma~A.1, Lemma A~4]{BM22}, we  obtain that $U^\pm\in{\rm UC}^1(\0^\pm)$ satisfies also \eqref{BVP}$_4$ and
\[
[\p_{\nu_\G} U] = \big((1+f'^2)^{-1/2}\phi'\big)\circ\Xi_\G^{-1}
\]
It is now easy to infer from \eqref{nablau'} that also \eqref{BVP}$_1$ holds true, and therewith we established the existence of a solution.\medskip
 
\end{proof}

We are now in a position to solve the boundary value problems \eqref{PB1fixed} for $u^+$ and $u^-$.
\begin{prop}\label{P:1}
Given $f\in H^4(\R)$, there exist unique solutions~${u^\pm\in{\rm C}^2(\0^\pm)\cap {\rm UC}^1(\0^\pm)}$ to~\eqref{PB1fixed}  such  that
 $\p_{\nu_\G} u^\pm\circ\Xi_\G=(1+f'^2)^{-1/2}(\phi^\pm)'$ for some functions~${\phi^\pm \in H^2(\R)}$.
Furthermore, $\nabla u^\pm=v^\pm$ in $\0^\pm$, where
\begin{equation}\label{velocities}
v^\pm(x,y):=\frac{1}{2\pi}\int_\R\frac{(f(s)-y,x-s)}{(x-s)^2+(y-f(s))^2}\alpha^\pm(s)\, {\rm d}s,\qquad (x,y)\in\0^\pm,
\end{equation}
and with density functions $\alpha^\pm\in H^1(\R) $ given by the relation
 \begin{equation}\label{densfunc}
\alpha^\pm=2(\mp1+\bA(f))^{-1}[(\kappa(f))']\in H^1(\R).
\end{equation}
\end{prop}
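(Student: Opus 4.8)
The plan is to take \eqref{velocities}--\eqref{densfunc} as an ansatz and to verify a posteriori that it yields a solution. Writing $z=x+iy$ and $\zeta(s):=s+if(s)$, a short computation gives $\overline{v^\pm(z)}=-\frac{i}{2\pi}\int_\R\alpha^\pm(s)\,(z-\zeta(s))^{-1}\,{\rm d}s$, which is holomorphic on $\R^2\setminus\G$. Since $\0^\pm$ is simply connected, $v^\pm$ is therefore the gradient of a harmonic function $u^\pm$ on $\0^\pm$, uniquely determined up to an additive constant, and in particular \eqref{PB1fixed}$_1$ holds. For densities $\alpha^\pm\in H^1(\R)$ the potential estimates of the type \cite[Lemma~A.1, Lemma~A.4]{BM22} already used in the proof of Proposition~\ref{P:A1} then give $u^\pm\in{\rm C}^2(\0^\pm)\cap{\rm UC}^1(\0^\pm)$, that $v^\pm$ extends continuously to $\G$ from within $\0^\pm$, and that $\nabla u^\pm=v^\pm\to0$ as $|(x,y)|\to\infty$, so that \eqref{PB1fixed}$_3$ is satisfied as well.

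Next I would fix $\alpha^\pm$ so that \eqref{PB1fixed}$_2$ holds. Since $u^\pm\in{\rm UC}^1(\0^\pm)$, the map $u^\pm\circ\Xi_\G$ is ${\rm C}^1$ on $\R$; differentiating it with the substitution $s\mapsto x-s$ and the Plemelj jump relations for the one-sided traces of $v^\pm$ on $\G$ (the jump of the Cauchy-type kernel producing the $\mp1$ in the first identity below), one identifies
\[
(u^\pm\circ\Xi_\G)'=\tfrac12\big(\mp1+\bA(f)\big)[\alpha^\pm],\qquad \big(1+f'^2\big)^{1/2}\big(\p_{\nu_\G}u^\pm\circ\Xi_\G\big)=\tfrac12\bB(f)[\alpha^\pm].
\]
Because $f\in H^4(\R)$ one has $\kappa(f)\in H^2(\R)$, hence $(\kappa(f))'\in H^1(\R)$, and Lemma~\ref{L:MP0-}(iii) gives $\mp1+\bA(f)\in{\rm Isom}(H^1(\R))$; thus \eqref{densfunc} is meaningful, $\alpha^\pm\in H^1(\R)$, and $(u^\pm\circ\Xi_\G)'=(\kappa(f))'=(\kappa_\G\circ\Xi_\G)'$, so that after fixing the free additive constant in $u^\pm$ the Dirichlet condition $u^\pm=\kappa_\G$ on $\G$ holds.

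For the representation of $\p_{\nu_\G}u^\pm$ I would first write $\alpha^\pm=(\psi^\pm)'$ with $\psi^\pm\in H^2(\R)$. By Lemma~\ref{L:MP0-}(v), $\pm1+\bA(f)^*\in{\rm Isom}(H^2(\R))$, so $\psi^\pm:=-2(\pm1+\bA(f)^*)^{-1}[\kappa(f)]\in H^2(\R)$ is well defined; applying $d/dx$ to $(\pm1+\bA(f)^*)[\psi^\pm]=-2\kappa(f)$ and using Lemma~\ref{L:MP0-}(iv) yields $(\mp1+\bA(f))[(\psi^\pm)']=2(\kappa(f))'$, whence $\alpha^\pm=(\psi^\pm)'$ by injectivity of $\mp1+\bA(f)$ on $H^1(\R)$. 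Lemma~\ref{L:MP0-}(iv) then gives $\bB(f)[\alpha^\pm]=-\big(\bB(f)^*[\psi^\pm]\big)'$, and since $(\psi^\pm)'\in H^1(\R)$ and $f\in H^2(\R)$, Lemma~\ref{L:MP0}(ii) (with $k=2$) shows $\big(\bB(f)^*[\psi^\pm]\big)'=-\bB(f)[(\psi^\pm)']\in H^1(\R)$, so $\bB(f)^*[\psi^\pm]\in H^2(\R)$; with $\phi^\pm:=-\frac12\bB(f)^*[\psi^\pm]\in H^2(\R)$ the second identity above becomes $\p_{\nu_\G}u^\pm\circ\Xi_\G=(1+f'^2)^{-1/2}(\phi^\pm)'$.

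For uniqueness, if $(u_i^+,u_i^-)$, $i=1,2$, are two solutions in the stated class, the differences $w^\pm:=u_1^\pm-u_2^\pm$ are harmonic in $\0^\pm$, vanish on $\G$, satisfy $\nabla w^\pm\to0$ at infinity, and $\p_{\nu_\G}w^\pm\circ\Xi_\G=(1+f'^2)^{-1/2}(\phi_1^\pm-\phi_2^\pm)'$ with $\phi_1^\pm-\phi_2^\pm\in H^2(\R)$; hence $w:=w^+{\bf 1}_{\0^+}+w^-{\bf 1}_{\0^-}$ solves the transmission problem \eqref{BVP} with $\phi:=(\phi_1^+-\phi_2^+)-(\phi_1^--\phi_2^-)$, so by Proposition~\ref{P:A1} it coincides up to an additive constant with the potential \eqref{u} for this $\phi$, which is continuous across $\G$. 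Since $w^\pm=0$ on $\G$, the trace of \eqref{u} on $\G$ must be constant, i.e. its tangential derivative $\frac12\bB(f)[\phi']=-\frac12(\bB(f)^*[\phi])'$ vanishes; as $\bB(f)^*[\phi]\in H^1(\R)$ this forces $\bB(f)^*[\phi]\equiv0$, hence $\phi'=0$, so $w$ solves the homogeneous problem \eqref{BVP}, and Proposition~\ref{P:A1} together with $w=0$ on $\G$ gives $w\equiv0$, i.e. $u_1^\pm=u_2^\pm$. I expect the main obstacle to be the boundary-trace analysis of the second and third steps — justifying that $v^\pm$, and hence $u^\pm\circ\Xi_\G$, extends in a ${\rm C}^1$ fashion up to $\G$, identifying the one-sided limits (with the correct $\mp\frac12\alpha^\pm$ jumps) in terms of $\bA(f)$ and $\bB(f)$, and carrying out the $H^2$-regularity bootstrap for $\phi^\pm$ — while in the uniqueness step the delicate point is passing from the constancy of the trace of \eqref{u} on $\G$ to $\phi'=0$ (equivalently, injectivity of $\bB(f)^*$ on $H^1(\R)$, which for $f=0$ reduces to that of the Hilbert transform; a Liouville/Phragm\'en--Lindel\"of argument in $\0^\pm$ is an alternative).
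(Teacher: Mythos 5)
Your argument is essentially the paper's: the same ansatz \eqref{velocities}--\eqref{densfunc}, the same Plemelj identities $(u^\pm\circ\Xi_\G)'=\tfrac12(\mp1+\bA(f))[\alpha^\pm]$ and $(1+f'^2)^{1/2}\,\p_{\nu_\G}u^\pm\circ\Xi_\G=\tfrac12\bB(f)[\alpha^\pm]$, the same construction $\phi^\pm=-\tfrac12\bB(f)^*[\psi^\pm]$ with $\psi^\pm=2(\mp1-\bA(f)^*)^{-1}[\kappa(f)]$ via Lemma~\ref{L:MP0-}~(iv)--(v), and the same reduction of uniqueness to Proposition~\ref{P:A1} and the trace of \eqref{u} (the paper merely decouples the phases by taking $U^-=0$ instead of your combined transmission problem). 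The single ingredient you flag as open, the injectivity of $\bB(f)$, is precisely the lower bound $\|\bB(f)[\alpha]\|_{2}\geq C\|\alpha\|_{2}$ of \cite[Eqs.~(3.22) and (3.25)]{MBV18}, which the paper also handles by citation rather than proof.
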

\begin{proof}
\noindent{\em (i) Existence.}
According to  Lemma~\ref{L:MP0-}~(iii) we have $\mp 1+\bA(f)\in{\rm Isom}(H^1(\R))$ and, since~$(\kappa(f))'\in H^1(\R),$ the density functions $\alpha^\pm$ 
defined in \eqref{densfunc} are well-defined and belong to~${H^1(\R)}$.
We next infer from \cite[Lemma A.1]{BM22} that  the vector fields $v^\pm$ defined in~\eqref{velocities} belong to~${{\rm C}^\infty(\0^\pm)\cap {\rm UC}(\0^\pm)}$ and
\begin{align}\label{Vary}
v^\pm\circ\Xi_\G(x)=\frac{1}{2\pi }\PV\int_\R\frac{(f(s)-f(x),x-s)}{(x-s)^2+(f(x)-f(s))^2}\alpha^\pm(s)\, {\rm d}s\mp\frac{1}{2}\frac{\alpha^\pm(1,f')}{1+f'^2}(x),\quad x\in\R.
\end{align} 
Moreover, $v^\pm$ satisfies the asymptotic boundary condition $v^\pm(x,y)\to 0$ for $|(x,y)|\to\infty$ and
\begin{equation*} 
{\rm div\, } v^\pm={\rm rot\, } v^\pm=0\qquad\text{in $\0^\pm$,}
\end{equation*}
 see \cite[Lemma A.4]{BM22}.
 Setting $v^\pm=(v^\pm_1,v^\pm_2)$, the relation ${\rm rot\, } v^\pm=0$ in $\0^\pm$ ensures that the functions 
 \[
u^\pm(x,y):=c^\pm +\int_0^xv^\pm_1(s,\pm d)\, ds+\int_{\pm  d}^yv^\pm_2(x,s)\, {\rm d}s,  \qquad (x,y)\in\0^\pm,
 \] 
 where $c^\pm\in\R$ and $d>\|f\|_\infty$, satisfy $\nabla u^\pm=v^\pm$ in~$\0^\pm$.
Moreover, $u^\pm\in{{\rm C}^2(\0^\pm)\cap {\rm UC}^1(\0^\pm)}$ and, since $v^\pm$ are divergence free, \eqref{PB1fixed}$_1$ is satisfied.
It is clear that also the asymptotic  boundary conditions \eqref{PB1fixed}$_2$ hold.
Combining \eqref{OpAB}, \eqref{Vary}, and the  relation~${\nabla u^\pm=v^\pm}$ on~$\G$,  we further have
\begin{equation}\label{norder}
\p_{\nu_\G} u^\pm\circ\Xi_\G=\frac{(1+f'^2)^{-1/2}}{2}\bB(f)[\alpha^\pm].
\end{equation}
In order to show that $\bB(f)[\alpha^\pm]$ are derivatives of functions in $H^2(\R)$ we define  $\beta^\pm\in H^2(\R)$ by the relations
\begin{equation}\label{densfunc2}
\beta^\pm=2(\mp1-\bA(f)^*)^{-1}[\kappa(f)],
\end{equation}
see Lemma~\ref{L:MP0-}~(v).
We next differentiate \eqref{densfunc2} with respect to $x$ and infer then from  Lemma~\ref{L:MP0-}~(iii)-(iv) that $(\beta^\pm)'=\alpha^\pm$ and 
\[
\bB(f)[\alpha^\pm]=\bB(f)[(\beta^\pm)']=-(\bB(f)^*[\beta^\pm])'.
\]
Setting $\phi^\pm :=-\bB(f)^*[\beta^\pm/2]$, it follows from \eqref{adjBNM} and Lemma~\ref{L:MP0}~(ii) that $\phi^\pm \in H^2(\R)$.
 Moreover,  \eqref{norder} lead to $\p_{\nu_\G} u^\pm\circ\Xi_\G=(1+f'^2)^{-1/2}(\phi^\pm)'$.
As a final step we show that the additive constants~$c^\pm$ can be chosen such that also \eqref{PB1fixed}$_2$ are satisfied.
 Indeed,  in view of~\eqref{densfunc} and~\eqref{Vary}, we have
 \begin{align*}
\frac{d}{dx}(u^\pm|_\G\circ\Xi_\G)= (1,f')\cdot v^\pm|_\Gamma\circ\Xi_\G =\frac{1}{2}(\mp 1+\bA(f))[\alpha^\pm]=(\kappa(f))',
 \end{align*}
so that    $u^\pm|_\G\circ\Xi_\G-\kappa(f)$   is a constant function. 
Therewith, we established the existence of a solution to \eqref{PB1fixed}.\medskip

\noindent{\em (ii) Uniqueness.} 
It suffices to show that the homogeneous  problems
\begin{equation}\label{PB10}
\left.
\arraycolsep=1.4pt
\begin{array}{rcllll}
\Delta u^\pm&=&0&\quad\text{in $\Omega^\pm$,}\\[1ex]
u^\pm&=& 0&\quad\text{on $\Gamma$,}\\[1ex]
\nabla u^\pm &\to&0&\quad\text{for $|(x,y)|\to\infty$,}
\end{array}
\right\}
\end{equation}
have unique solutions $u^\pm$  with the required properties.
We establish only the uniqueness of~$u^+$ (that of $u^-$ follows by similar arguments).
Let thus $\phi^+\in H^2(\R)$ be the function  which satisfies the relation  $\p_{\nu_\G} u^+\circ\Xi_\G=(1+f'^2)^{-1/2}(\phi^+)'$.
Setting $U^-:=0$ and $U^+:=u^+$, we note that $(U^+,U^-)$ solves the boundary value problem~\eqref{BVP} (with $\phi=\phi^+$) and it is thus given  by the formula \eqref{u}.
In particular, it follows from \eqref{u} and \cite[Lemma A.1]{BM22} that
\[
0=U^-|_\G\circ\Xi_\G=U^+|_\G\circ\Xi_\G=-\frac{1}{2}\bB(f)^*[\phi'],
\]
and together with Lemma~\ref{L:MP0-}~(iv) we get
\[
0=-(\bB(f)^*[\phi'])'=\bB(f)[\phi'']. 
\] 
However, as shown in \cite[Eqs. (3.22) and (3.25)]{MBV18}, there exits a positive  constant~$C $ such that~${\|\bB(f)[\alpha]\|_2\geq C\|\alpha\|_2}$ for all $\alpha\in L_2(\R)$. 
Therefore $\phi''=0$, hence also $\phi=0$.
We now infer from \eqref{u}  that $U^+=u^+=0$, and the uniqueness claim is proven. 
\end{proof}

 \section{The evolution problem and the proof of the main result}\label{Sec:3}

In this section we first formulate the original problem \eqref{PB} as an evolution problem for~$f$, see~\eqref{NNEP}.
Subsequently, we prove that the linearization of the right side of \eqref{NNEP} is the generator of an analytic semigroup, see Theorem~~\ref{T:GP} below, and we conclude the section with the proof 
of the main result stated in Theorem~\ref{MT1}.
\subsection{The evolution problem}\label{Sec:3.1}
In order to formulate the system  \eqref{PB} as an evolution problem for $f$ we first infer from Proposition~\ref{P:1} that if $(f,u^\pm)$ is a solution to \eqref{PB}  as stated in Theorem~\ref{MT1}, 
then, for each $t>0$, we have
\begin{equation*}
\p_{\nu_{\G(t)}} u^\pm(t)\circ\Xi_{\G(t)}=-(1+f'^2(t))^{-1/2}(\bB(f(t))^*[(\mp1-\bA(f(t))^*)^{-1}[\kappa(f(t))]])'.
\end{equation*}
Together with \eqref{PB}$_4$ we arrive at the following evolution equation
\[
\frac{df}{dt}(t)=(\bB(f(t))^*[((-1-\bA(f(t))^*)^{-1}-(1-\bA(f(t))^*)^{-1})[\kappa(f(t))]])'\qquad\text{for  $t>0$}.
\]
As we want to solve the latter equation in the phase space $H^r(\R)$ with $r\in(3/2,2)$ we encounter the problem that
 the curvature $\kappa(f)$ is in general not a function, but a distribution.  
However, taking full advantage of the quasilinear character of the curvature operator 
we can formulate  the system~\eqref{PB} as the following quasilinear evolution problem
\begin{equation}\label{NNEP}
\frac{df}{dt}(t)=\Phi(f(t))[f(t)],\quad t>0,\qquad f(0)=f_0,
\end{equation}
where $\Phi:H^r(\R)\to\kL(H^{r+1}(\R), H^{r-2}(\R))$ is defined by the following formula
\begin{equation}\label{phi}
\Phi(f)[h]:=(\bB(f)^*[((-1-\bA(f)^*)^{-1}-(1-\bA(f)^*)^{-1})[\kappa(f)[h]]])',
\end{equation}
with $\kappa:H^r(\R)\to\kL(H^{r+1}(\R),H^{r-1}(\R))$ given by
\begin{equation}\label{curv}
\kappa(f)[h]:=\frac{h''}{(1+f'^2)^{3/2}}.
\end{equation}
We point out that, if $f\in H^2(\R)$, then $\kappa(f)[f]$ is exactly the pulled-back curvature $\kappa(f)$.
Moreover, arguing as in \cite[Appendix~C]{MP2021}, it is not difficult to prove that 
\begin{equation}\label{reg:curve}
\kappa\in {\rm C}^\infty(H^r(\R),\kL(H^{r+1}(\R), H^{r-1}(\R))).
\end{equation}
Recalling~\eqref{adjBNM}, it follows from  Lemma~\ref{L:MP0}~(iii) and Lemma~\ref{L:MP0-}~(ii), by also using the  smoothness of the map which associate to an isomorphism its inverse, that 
\begin{equation}\label{reg:B*A}
\bB(f)^*,\, (\pm1-\bA(f)^*)^{-1}\in {\rm C}^\infty(H^r(\R),\kL(H^{r-1}(\R))).
\end{equation}
Gathering \eqref{phi}-\eqref{reg:B*A}, we obtain in view of ${d/dx\in\kL(H^{r-1}(\R), H^{r-2}(\R))}$ that 
\begin{equation}\label{reg:PHI}
\Phi\in {\rm C}^\infty(H^r(\R),\kL(H^{r+1}(\R), H^{r-2}(\R))).
\end{equation}

\subsection{The parabolicity property}\label{Sec:3.2}
Our next goal is to prove that the problem~\eqref{NNEP} is of parabolic  type  in the sense that, for each $f\in H^r(\R)$, $r\in(3/2,2)$, the operator $\Phi(f)$ 
is the generator of an analytic semigroup in~$\kL(H^{r-2}(\R))$. 
This is the content of the next result.

\begin{thm}\label{T:GP}
Given  $f\in H^r(\R)$, $r\in(3/2,2),$ it holds that $-\Phi(f)\in\mathcal{H}(H^{r+1}(\R), H^{r-2}(\R)).$
\end{thm}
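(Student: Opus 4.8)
The plan is to prove the generator property via a localization argument: freeze the coefficients of the leading-order operator at a point, reduce to a constant-coefficient Fourier multiplier, and then patch the local estimates together using a partition of unity together with a perturbation argument. Since $\Phi(f)[h]$ is, up to lower-order terms, a third-order operator in $h$ (one derivative from the outer $d/dx$ in~\eqref{phi}, and two from $\kappa(f)[h]$ in~\eqref{curv}), the model problem at a frozen point $x_0$ should be the operator $h\mapsto -c(x_0)(-\partial_x^2)^{3/2}h$ for a suitable positive constant $c(x_0)$; equivalently, after extracting $(1+f'(x_0)^2)^{-3/2}$ and the constant-coefficient versions $\bA_0,\bB_0$ of the singular integral operators (which are classical Fourier multipliers — $\bA_0$ corresponds to the multiplier $-i\,\mathrm{sign}(\xi)\cdot(\text{something in }f'(x_0))$ type expression, $\bB_0$ to a Hilbert-transform-type multiplier), one computes the symbol of the frozen operator explicitly and checks that its real part is bounded below by $\varepsilon|\xi|^3$ for some $\varepsilon>0$. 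This explicit symbol computation is the heart of the parabolicity.

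Concretely, the steps I would carry out are: (1) Show that $\Phi(f)$ differs from its ``principal part'' $\Phi_{\mathrm{pr}}(f)$ by an operator in $\kL(H^{r+1},H^{r-2})$ that is lower order, i.e. factors through $H^{r+1-\delta}$ for some $\delta>0$, using Lemma~\ref{L:MP0} to control commutators of the $B^0_{n,m}(f)$ with multiplication operators and to move the coefficient $(1+f'^2)^{-3/2}$ and the inverses $(\pm1-\bA(f)^*)^{-1}$ out of the way. (2) For the frozen operator at $x_0$, use Lemma~\ref{L:MP0-}~(iv) to rewrite $\bB(f)^*$ composed with the inverse of $(\pm1-\bA(f)^*)$ acting on $\kappa$, then differentiate; at the frozen level $\bA(f)^*$ and $\bB(f)^*$ become $\bA(f'(x_0))^*$, $\bB(f'(x_0))^*$, which are translation-invariant and computable by Fourier transform — their symbols are, up to constants, $\mp i\,\mathrm{sign}(\xi)$ and related elementary expressions in $f'(x_0)$. (3) Combine to get the full symbol $m(x_0,\xi)$ of the frozen operator and verify $\re m(x_0,\xi)\le -\varepsilon|\xi|^3$ uniformly, which by the classical Mikhlin-type / interpolation-inequality argument (as in the localization schemes of \cite{MBV18, MP2022, MP2021}) gives $-\Phi_{\mathrm{pr,frozen}}(f)(x_0)\in\kH(H^{r+1},H^{r-2})$ with resolvent bounds uniform in $x_0$. (4) Patch: choose a finite (locally finite) partition of unity $\{\pi_j\}$ subordinate to intervals where $f'$ varies by less than a fixed small amount; prove the Gårding-type estimate $\re\langle \Lambda^{r-2}\Phi(f)[h],\Lambda^{r-2}\Lambda^{3}h\rangle \lesssim -\|h\|_{H^{r+1}}^2 + \|h\|_{H^{r+1-\delta}}^2$ for $h\in H^{r+1}$, and deduce that $\lambda-\Phi(f)\in\mathrm{Isom}(H^{r+1},H^{r-2})$ for $\re\lambda$ large, with the resolvent estimate characterizing generators of analytic semigroups; the lower-order term $\|h\|_{H^{r+1-\delta}}^2$ is absorbed by interpolation and taking $\re\lambda$ large.

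The main obstacle I expect is step~(3), the explicit symbol computation and the sign verification: one must carefully track how the two inverses $(-1-\bA(f)^*)^{-1}$ and $(1-\bA(f)^*)^{-1}$ combine (their difference, which appears in~\eqref{phi}, is what produces the ``two-phase'' structure and must have the correct sign), how the extra $d/dx$ and the factor $(1+f'^2)^{-1/2}$ versus $(1+f'^2)^{-3/2}$ interact, and confirm that after all cancellations the real part of the symbol is genuinely negative and of order exactly $|\xi|^3$ rather than degenerating. A secondary technical nuisance is that one works in the negative-index space $H^{r-2}(\R)$ with $r\in(3/2,2)$, so $r-2<0$; the duality pairings and the estimates from Lemma~\ref{L:MP0}~(iv) (the $\kL(H^{r-2})$ bounds) must be invoked rather than the more familiar $L_2$ theory, but this is exactly what part~(iv) of Lemma~\ref{L:MP0} and part~(ii) of Lemma~\ref{L:MP0-} are designed to supply, so it should go through.
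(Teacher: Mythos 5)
Your overall strategy --- localize, freeze the coefficients at a point, compare with a constant-coefficient Fourier multiplier whose symbol behaves like $-c|\xi|^3$, and patch the local estimates with a partition of unity --- is exactly the strategy of the paper (Proposition~\ref{P:AP} together with the proof of Theorem~\ref{T:GP}, which follows the scheme of \cite{E94, ES95, ES97}). Two points deserve comment, one where you overestimate the difficulty and one where your plan has a genuine hole.

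On your step~(3): the symbol computation you single out as the main obstacle is in fact nearly trivial, for a structural reason you do not identify. The kernel of $\bA(f)$ in \eqref{OpAB} contains the factor $f'(x)-(\delta_{[x,s]}f)/s$, which vanishes identically when $f$ is affine; hence the frozen versions of $\bA(f)$ and $\bA(f)^*$ are the \emph{zero} operator, not a nontrivial multiple of $-i\,\mathrm{sign}(\xi)$ as you suggest. Consequently $(\pm1-\bA(f)^*)^{-1}$ freezes to $\pm1$, their difference in \eqref{phi} freezes to $-2$, and $\bB(f)^*$ freezes to $-H$, so the frozen operator is an explicit positive constant multiple of $H\,d^3/dx^3$ with coefficient proportional to $(1+f'(x_0)^2)^{-3/2}$ --- the parabolicity is manifest and there is no delicate cancellation or sign verification left to do. Note also that the paper never localizes the inverses $(\pm1-\bA(\tau f)^*)^{-1}$ by computing frozen symbols: it introduces $\vartheta^\pm=(\pm1-\bA(\tau f)^*)^{-1}[\kappa(\tau f)[h]]$, multiplies the \emph{defining equation} by the cutoff $\pi_j^\e$, and commutes (Steps~1 and~3 of Proposition~\ref{P:AP}); this sidesteps the problem of localizing an inverse, which your step~(1) would otherwise have to confront directly.

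The genuine gap is in your step~(4). An a priori estimate --- whether your G\aa rding-type pairing or the resolvent inequality --- gives injectivity and closed range of $\lambda-\Phi(f)$, but not surjectivity, and you assert ``deduce that $\lambda-\Phi(f)\in{\rm Isom}(H^{r+1},H^{r-2})$'' without supplying the missing ingredient. Moreover, a bound on the real part of a single pairing does not by itself yield the sectorial resolvent estimate for a non-normal, nonlocal third-order operator (the Lions form route would require $\Phi(f)$ to be bounded on the intermediate interpolation couple, a mapping property not available here). The paper closes both gaps at once: it proves the localization estimate \eqref{D1} \emph{uniformly along the homotopy} $\tau\mapsto\Phi(\tau f)$, $\tau\in[0,1]$, sums over $j$ to obtain the full resolvent estimate \eqref{KDED} for every $\tau$, and then applies the method of continuity starting from the explicit isomorphism $\lambda-\Phi(0)$ to conclude that $\omega-\Phi(f)$ is onto. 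If you retain your single-operator localization, you must add either this homotopy argument or some other surjectivity argument, and you should upgrade the G\aa rding inequality to the two-sided resolvent bound $\kappa\|(\lambda-\Phi(f))[h]\|_{H^{r-2}}\geq|\lambda|\,\|h\|_{H^{r-2}}+\|h\|_{H^{r+1}}$ for $\re\lambda\geq\omega$, which is what the generation criterion actually requires.
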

In the proof of Theorem~\ref{T:GP} we exploit of the fact that, given $h\in H^{r+1}(\R)$, the action~${\Phi(f)[h]}$ is the derivative of a function  which lies in $H^{r-1}(\R)$.
The proof of Theorem~\ref{T:GP} is postponed to the end of this subsection and it relies on a strategy inspired  by \cite{E94,  ES95,  ES97}. 

As a first step we associate to $\Phi(f)$ the continuous path
\[
[\tau\mapsto \Phi(\tau f)]:[0,1]\to \mathcal{L}(H^{r+1}(\R), H^{r-2}(\R)),
\]
and we note that 
\[
\Phi(0)=-2\frac{d}{dx}B(0)^*\frac{d^2}{dx^2}= 2H\frac{d^3}{dx^3},
\]
where $H$ is the Hilbert transform. In particular, $\Phi(0)$ is the Fourier multiplier defined by the  symbol $[\xi\mapsto2|\xi|^3].$
 As a second step we locally approximate in  Proposition~\ref{P:AP} the operator~$\Phi(\tau f)$ by  Fourier multipliers which coincide, up to some positive multiplicative constants, with~$\Phi(0)$. 
As a final third  step we  establish for these Fourier multipliers  suitable (uniform) resolvent estimates, see \eqref{L:FM1}-\eqref{L:FM2}. 
The proof of Theorem~\ref{T:GP} follows then by combining the results established in these three steps.

Before presenting  Proposition~\ref{P:AP}, we  choose  for each~${\varepsilon\in(0,1)}$, a finite $\varepsilon$-localization family, that is  a family  
\[\{(\pi_j^\varepsilon, x_j^\e)\,:\, -N+1\leq j\leq N\}\subset  {\rm C}^\infty(\mathbb{R},[0,1])\times\R,\]
with $N=N(\varepsilon)\in\mathbb{N} $ sufficiently large, such that $x_j^\e\in\supp\pi_j^\e$, $-N+1\leq j\leq N$, and
\begin{align*}
\bullet\,\,\,\, \,\,  &\mbox{$\supp \pi_j^\varepsilon \subset\{|x|\leq \varepsilon+1/\varepsilon\}$ is an interval of length $\varepsilon$ for $\ |j|\leq N-1$;}\\
 \bullet\,\,\,\, \,\, &\mbox{$\supp \pi_{N}^\varepsilon\subset\{|x|\geq 1/\e\}$;} \\[1ex]
\bullet\,\,\,\, \,\, &\mbox{ $ \pi_j^\varepsilon\cdot  \pi_l^\varepsilon=0$ if $[|j-l|\geq2, \max\{|j|, |l|\}\leq N-1]$ or $[|l|\leq N-2, j=N];$} \\[1ex]
\bullet\,\,\,\, \,\, &\mbox{ $\displaystyle\sum_{j=-N+1}^N(\pi_j^\varepsilon)^2=1;$} \\[1ex]
 \bullet\,\,\,\, \,\, &\mbox{$\|(\pi_j^\varepsilon)^{(k)}\|_\infty\leq C\varepsilon^{-k}$ for all $ k\in\mathbb{N}, -N+1\leq j\leq N$.} 
\end{align*} 
To each finite $\varepsilon$-localization family we associate  a second family   
$$\{\chi_j^\varepsilon\,:\, -N+1\leq j\leq N\}\subset {\rm C}^\infty(\mathbb{R},[0,1])$$ with the following properties
\begin{align*}
\bullet\,\,\,\, \,\,  &\mbox{$\chi_j^\varepsilon=1$ on $\supp \pi_j^\varepsilon$ for $-N+1\leq j\leq N$ and $\supp\chi_N^\varepsilon\subset \{|x|\geq 1/\varepsilon-\varepsilon\}$;} \\[1ex]
\bullet\,\,\,\, \,\,  &\mbox{$\supp \chi_j^\varepsilon$ is an interval  of length $3\varepsilon$ and with the same midpoint as $ \supp \pi_j^\varepsilon$, $|j|\leq N-1$.} 
\end{align*} 
It is not difficult to prove that, given $r\in\R$ and $\e\in(0,1),$  there exists $c=c(\e,r)\in(0,1)$ such that for all $h\in H^r(\R)$ we have
\begin{equation}\label{equivnorm}
c\|h\|_{H^r}\leq \sum_{j=-N+1}^N\|\pi_j^\varepsilon h\|_{H^r}\leq c^{-1}\|h\|_{H^r}. 
\end{equation}

We are now in a position  to establish the aforementioned localization result.
\begin{prop}\label{P:AP} 
Let  $3/2<r'<r<2,$   $f\in H^r(\R)$, and~$\nu>0$ be given. 
Then, there exist~${\e\in(0,1)}$, a $\e$-locali\-za\-tion family  $\{(\pi_j^\e,x_j^\e)\,:\, -N+1\leq j\leq N\} $, and  a constant~${K=K(\e)}$
 such that 
 \begin{equation}\label{D1}
  \|\pi_j^\e \Phi(\tau f) [h]-2a_{\tau_,j}\Phi(0)[\pi_j^\e h]\|_{H^{r-2}}\leq \nu \|\pi_j^\e h\|_{H^{r+1}}+K\|  h\|_{H^{ r'+1}}
 \end{equation}
 for all $ -N+1\leq j\leq N$, $\tau\in[0,1],$  and  $h\in H^{r+1}(\R)$,  where, letting $a_\tau:=(1+\tau^2f'^2)^{-3/2}$, we set
\[ 
a_{\tau,N}:=\lim_{|x|\to\infty}a_{\tau}(x)=1\qquad\text{and}\qquad a_{\tau,j}:= a_\tau(x_j^\e),\quad |j|\leq N-1.
\]
\end{prop}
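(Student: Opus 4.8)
The plan is to localize the nonlinear operator $\Phi(\tau f)$ by freezing coefficients at the centers $x_j^\e$ and to show that, up to a small perturbation and a term controlled in a weaker norm, the localized piece agrees with the constant-coefficient Fourier multiplier $2a_{\tau,j}\Phi(0)=2a_{\tau,j}H\,d^3/dx^3$. First I would expand $\Phi(f)[h]$ according to its definition \eqref{phi}: it is $d/dx$ applied to $\bB(f)^*$ composed with the resolvent combination $(-1-\bA(f)^*)^{-1}-(1-\bA(f)^*)^{-1}$ applied to $\kappa(f)[h]=h''/(1+f'^2)^{3/2}$. The crucial structural observation, already exploited in the preceding subsection, is that each of the operators $\bB(f)^*$, $(\pm1-\bA(f)^*)^{-1}$ and multiplication by $(1+f'^2)^{-3/2}$ acts \emph{almost locally}: against a cutoff $\pi_j^\e$ one has commutator estimates of the form $\|[\pi_j^\e,T(f)]\|_{\kL(H^{r-1})}\le \nu$ for $\e$ small (plus a $K\|\cdot\|_{H^{r'-1}}$ remainder), because the kernels are of Calderón--Zygmund type and $f\in H^r$ with $r>3/2$. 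This is the standard mechanism behind \eqref{equivnorm} and behind the localization lemmas in \cite{E94,ES95,ES97,MBV18,MP2021}; I would cite Lemma~\ref{L:MP0}~(iii)--(iv) and Lemma~\ref{L:MP0-}~(ii) together with the algebra/multiplier properties of $H^{r-1}(\R)$ for $r-1>1/2$.

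Next I would carry out the freezing in three stages, peeling off one error term at a time. Stage one: replace $\kappa(f)[h]$ by $a_\tau(x_j^\e)\,(\pi_j^\e h)''=a_{\tau,j}(\pi_j^\e h)''$ inside the support of $\pi_j^\e$; the difference comes from (a) the variable coefficient $a_\tau-a_{\tau,j}$, which is $O(\e^{r-1})$ on $\supp\chi_j^\e$ by the embedding $H^{r-1}\hookrightarrow C^{r-3/2}$ and hence contributes $\le\nu\|\pi_j^\e h\|_{H^{r+1}}$, and (b) the commutator $[\pi_j^\e,(\cdot)'']$ which produces only lower-order terms bounded by $K\|h\|_{H^{r'+1}}$. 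Stage two: pull $\pi_j^\e$ (or rather $\chi_j^\e$, using $\chi_j^\e\pi_j^\e=\pi_j^\e$) through $\bB(f)^*$ and the two resolvents, picking up commutators each bounded in $\kL(H^{r-1})$ by a small constant for $\e$ small, plus compact remainders landing in $H^{r'-1}$. Stage three: replace the frozen operators $\bB(f)^*$, $(\pm1-\bA(f)^*)^{-1}$ evaluated at $f$ by their values at $f(x_j^\e)$, i.e.\ by the corresponding operators for the affine function $x\mapsto f(x_j^\e)+f'(x_j^\e)(x-x_j^\e)$; by translation/dilation/tilting invariance of the principal-value kernels these frozen operators are Fourier multipliers, and an explicit computation (as in the identity $\Phi(0)=2H\,d^3/dx^3$ in the text) shows the composite collapses to a scalar multiple of $\Phi(0)$. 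Here the scalar works out to $1$ from the $\bB/\bA$ part — exactly as $\Phi(0)=2H\,d^3/dx^3$ with no $f$-dependence in that factor at the linearized level — so the only surviving coefficient is $a_{\tau,j}$ and the prefactor $2$.

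The main obstacle I anticipate is stage three: controlling the error when replacing $\bB(f)^*$ and $(\pm1-\bA(f)^*)^{-1}$ by their frozen (affine-coefficient) counterparts, uniformly in $\tau\in[0,1]$. The resolvents are only available through the abstract isomorphism statements of Lemma~\ref{L:MP0-}~(ii), so one cannot manipulate them by hand; instead I would write, schematically, $(\pm1-\bA(\tau f)^*)^{-1}-(\pm1-\bA_j^*)^{-1}=(\pm1-\bA(\tau f)^*)^{-1}\big(\bA(\tau f)^*-\bA_j^*\big)(\pm1-\bA_j^*)^{-1}$ and estimate the middle difference in $\kL(H^{r-1})$ when acting on functions supported in $\supp\chi_j^\e$, using Lemma~\ref{L:MP0}'s Lipschitz dependence together with the smallness of $\|(\tau f-\ell_j)'\|_{L_\infty(\supp\chi_j^\e)}=O(\e^{r-3/2})$ where $\ell_j$ is the affine approximant. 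Uniformity in $\tau$ follows because all constants in Lemma~\ref{L:MP0-}~(ii) depend only on $\|\tau f\|_{H^r}\le\|f\|_{H^r}$. Assembling the three stages, summing the commutator and coefficient errors (each $\le C\nu$) and absorbing the finitely many $H^{r'+1}$-remainders into the constant $K=K(\e)$, yields \eqref{D1}; the case $j=N$ is handled identically using that $a_\tau\to1$ at infinity, so that the frozen coefficient is exactly $a_{\tau,N}=1$.
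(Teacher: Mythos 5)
Your overall architecture (localize, freeze coefficients, identify the frozen operator with a constant multiple of $H\,d^3/dx^3$) is the right one, and your computation of the frozen symbol is correct: for an affine $\ell$ one indeed has $\bA(\ell)^*=0$ and $\bB(\ell)^*=-H$, so the composite collapses to $a_{\tau,j}$ times a fixed multiple of $H\,d^3/dx^3$ exactly as you claim. The genuine gap is in your Stage three, the step you yourself flag as the main obstacle. You propose to replace $(\pm1-\bA(\tau f)^*)^{-1}$ by the resolvent of the affine approximant $\ell_j$ via the resolvent identity, controlling $\bA(\tau f)^*-\bA(\ell_j)^*$ on functions supported in $\supp\chi_j^\e$ by the Lipschitz dependence from Lemma~\ref{L:MP0} together with the smallness of $\|(\tau f-\ell_j)'\|_{L_\infty(\supp\chi_j^\e)}$. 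This does not go through as stated: the operators $\bA(\cdot)^*$ are nonlocal (their kernels involve $(f(x)-f(x-s))/s$ for all $s\in\R$, so even on localized data they see $f$ far from $x_j^\e$), while the Lipschitz dependence in Lemma~\ref{L:MP0} is with respect to the \emph{global} $W^1_\infty$- resp. $H^k$-norms. Globally $\tau f'-\ell_j'$ is not small, and $\ell_j\notin H^r(\R)$, so neither the Lipschitz estimates nor the isomorphism statements of Lemma~\ref{L:MP0-} apply to $\ell_j$. Repairing this requires the near/far splitting packaged in Lemmas~\ref{L:AL2} and~\ref{L:AL3}, which are available only for the forward operators $B^0_{n,m}(f)$, not for resolvents. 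A secondary misstatement: the commutators $[\pi_j^\e,\bA(\tau f)^*]$ are not small in $\kL(H^{r-1})$ for small $\e$ (the derivatives of $\pi_j^\e$ are of size $\e^{-1}$); what is true (Lemma~\ref{L:AL1}) is that they are \emph{smoothing}, mapping $L_2(\R)$ into $H^1(\R)$ with an $\e$-dependent constant, so they contribute only to the $K\|h\|_{H^{r'+1}}$ remainder and never to the $\nu$-term.

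The paper circumvents the inversion problem entirely. It introduces the auxiliary densities $\vartheta^\pm(\tau)[h]:=(\pm1-\bA(\tau f)^*)^{-1}[\kappa(\tau f)[h]]$ and localizes the \emph{equation} $(\pm1-\bA(\tau f)^*)[\vartheta^\pm]=\kappa(\tau f)[h]$ rather than the resolvent: multiplying by $\pi_j^\e$ and commuting via Lemma~\ref{L:AL1} yields the a priori bound $\|\pi_j^\e\vartheta^\pm(\tau)[h]\|_{H^{r-1}}\le C_0\|\pi_j^\e h\|_{H^{r+1}}+K\|h\|_{H^{r'+1}}$; then Lemmas~\ref{L:AL2}--\ref{L:AL3} applied to $\bA(\tau f)^*=B^0_{1,1}(\tau f)-B^0_{0,1}(\tau f)[\tau f'\,\cdot\,]$ show that its frozen symbol cancels, whence $\pm\pi_j^\e\vartheta^\pm(\tau)[h]\approx\pi_j^\e\kappa(\tau f)[h]\approx a_{\tau,j}(\pi_j^\e h)''$, and one concludes by localizing the outer $\bB(\tau f)^*$ to $-H$. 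If you replace your Stage three by this equation-based argument (keeping your Stage one, which coincides with the paper's treatment of $\kappa(\tau f)[h]$), your proof becomes essentially the paper's.
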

\begin{proof} 
In the following    $C$ and $C_0$ are constants that do not depend on $\e$, while constants denoted by $K$ may depend on $\e$.

Given $-N+1\leq j\leq N$, $\tau\in[0,1],$  and~$h\in H^{r+1}(\R)$ we  have
\begin{align*}
&\|\pi_j^\e \Phi(\tau f) [h]-a_{\tau_,j}\Phi(0)[\pi_j^\e h]\|_{H^{r-2}}\\[1ex]
&=\|\pi_j^\e \Phi(\tau f) [h] -a_{\tau_,j}(H[(\pi_j^\e h)''])'\|_{H^{r-2}}\\[1ex]
&\leq \|(\pi_j^\e \bB(\tau f)^*[((-1-\bA(\tau f)^*)^{-1} -(1-\bA(\tau f)^*)^{-1})[\kappa(\tau f)[h]]])'-a_{\tau_,j}(H[(\pi_j^\e h)''])'\|_{H^{r-2}}\\[1ex]
&\qquad+\|(\pi_j^\e)' \bB(\tau f)^*[((-1-\bA(\tau f)^*)^{-1}-(1-\bA(f)^*)^{-1})[\kappa(\tau f)[h]]]\|_{H^{r-2}},
\end{align*}
where, in view of \eqref{adjBNM}, \eqref{curv}, Lemma~\ref{L:MP0}~(i), and Lemma~\ref{L:MP0-}~(i) we have
\begin{align*}
&\|(\pi_j^\e)' \bB(\tau f)^*[((-1-\bA(\tau f)^*)^{-1}-(1-\bA(f)^*)^{-1})[\kappa(\tau f)[h]]]\|_{H^{r-2}}\\[1ex]
&\leq K\|\bB(\tau f)^*[((-1-\bA(\tau f)^*)^{-1}-(1-\bA(f)^*)^{-1})[\kappa(\tau f)[h]]]\|_{H^{r-2}}\\[1ex]
&\leq   K\|\bB(\tau f)^*[((-1-\bA(\tau f)^*)^{-1}-(1-\bA(f)^*)^{-1})[\kappa(\tau f)[h]]]\|_{2}\\[1ex]
&\leq   K\|\kappa(\tau f)[h]\|_{2}\\[1ex]
&\leq K\|h\|_{H^{ r'+1}}.
\end{align*}
Since  ${d/dx\in\kL(H^{r-1}(\R), H^{r-2}(\R))}$ is a contraction, we have shown that 
\begin{equation}\label{EST:0}
\begin{aligned}
&\|\pi_j^\e \Phi(\tau f) [h]-2a_{\tau_,j}\Phi(0)[\pi_j^\e h]\|_{H^{r-2}}\\[1ex]
&\leq \|\pi_j^\e \bB(\tau f)^*[((-1-\bA(\tau f)^*)^{-1} -(1-\bA(\tau f)^*)^{-1})[\kappa(\tau f)[h]]]-2a_{\tau_,j}H[(\pi_j^\e h)'']\|_{H^{r-1}}\\[1ex]
&\qquad+K\|h\|_{H^{ r'+1}}.
\end{aligned}
\end{equation}
In  remains to estimated the first term on the right of \eqref{EST:0}. To this end several steps are needed.\medskip

\noindent{\em Step 1.} 
Given $\tau\in[0,1]$ and $h\in H^{1+r}(\R)$ we define $\vartheta^\pm(\tau)[h]\in H^{r-1}(\R)$ as the unique solutions to
\begin{equation}\label{thetatau}
(\pm1-\bA(\tau f)^*)[\vartheta^\pm(\tau)[h]]=\kappa(\tau f)[h],
\end{equation}
see \eqref{reg:curve} and Lemma~\ref{L:MP0}~(ii).
In this step we  prove there exists a  constant $C_0>0$ such that  for all $\e\in(0,1)$, $\tau\in[0,1]$, $-N+1\leq j\leq N$, and $h\in H^{r+1}(\R)$ we have
 \begin{equation}\label{x-1}
 \|\vartheta^\pm(\tau)[h]\|_{H^{r-1}}\leq C_0\|\pi_j^\e h\|_{H^{r+1}}+ K\|h\|_{H^{r'+1}}.
 \end{equation}
 Indeed, after multiplying  \eqref{thetatau} by~$\pi_j^\e,$ we arrive at
 \begin{align*} 
(\pm1-\bA(\tau f)^*)[\pi_j^\e \vartheta^\pm(\tau )[h]]&= \pi_j^\e \kappa(\tau f)[h]-(\bA(\tau f)^*[\pi_j^\e \vartheta^\pm(\tau )[h]]-\pi_j^\e\bA(\tau f)^*[\vartheta^\pm(\tau )[h]]),
\end{align*}
and it can bee easily shown that 
\[
\|\pi_j^\e\kappa(\tau f)[h]\|_{H^{r-1}}\leq C\|\pi_j^\e h\|_{H^{r+1}}+ K\|h\|_{H^{r'+1}}.
\]
Moreover,  since $r-1<1$, the commutator estimate in Lemma~\ref{L:AL1} together with \eqref{adjBNM}  yields 
\[
\|\bA(\tau f)^*[\pi_j^\e \vartheta^\pm(\tau )[h]]-\pi_j^\e\bA(\tau f)^*[ \vartheta(\tau )[h]]\|_{H^{r-1}}\leq K\|\vartheta(\tau )[h]\|_2 \leq  K\|  h\|_{H^{ r'+1}}.
\]
 The estimates \eqref{x-1} follow now from Lemma~\ref{L:MP0-}~(ii).\medskip
 
 \noindent{\em Step 2.}  
Recalling~\eqref{adjBNM}, we infer from  Lemma~\ref{L:AL2} if $|j|\leq N-1$, respectively from  Lemma~\ref{L:AL3} if~$j=N$, that, if $\e\in(0,1)$ is sufficiently small,
 then for all $\tau\in[0,1]$, $-N+1\leq j\leq N$, and~${h\in  H^{r+1}(\R)}$ we have  
 \begin{align*}
 \|\pi_j^\e \bB(\tau f)^*[\vartheta^\pm(\tau )[h]]+H[\pi^\e_j  \vartheta^\pm(\tau )[h]]\|_{H^{r-1}}\leq \frac{\nu}{4C_0}\|\pi^\e_j\vartheta^\pm(\tau )[h]\|_{H^{r-1}}+K\|\vartheta^\pm(\tau )[h]\|_{_{H^{r'-1}}}.
\end{align*}  
The estimates \eqref{x-1} and the property \eqref{reg:curve} (with $r=r'$) enable us to conclude that for all~${\tau\in[0,1]}$, $-N+1\leq j\leq N$, and $h\in  H^{r+1}(\R)$ it holds  that
 \begin{equation*}
 \|\pi_j^\e \bB(\tau f)^*[\vartheta^\pm(\tau )[h]]+H[\pi^\e_j  \vartheta^\pm(\tau )[h]]\|_{H^{r-1}}\leq \frac{\nu}{4}\|\pi^\e_j h\|_{H^{r+1}}+K\|h\|_{_{H^{r'+1}}},
\end{equation*}  
provided that $\e$ is  sufficiently small, and therefore
\begin{equation}\label{x-2}
\begin{aligned}
 &\|\pi_j^\e \bB(\tau f)^*[\vartheta^-(\tau )[h]-\vartheta^+(\tau )[h]]+H[\pi^\e_j (\vartheta^-(\tau )[h]-\vartheta^+(\tau )[h])]\|_{H^{r-1}}\\[1ex]
&\leq \frac{\nu}{2}\|\pi^\e_j h\|_{H^{r+1}}+K\|h\|_{_{H^{r'+1}}}.
\end{aligned}
\end{equation}  
\medskip

  \noindent{\em Step 3.} 
We show that, if~${\e\in(0,1)}$ is  sufficiently small, then for all~$\tau\in[0,1]$, $-N+1\leq j\leq N$, and~$h\in  H^{r+1}(\R)$ we have
 \begin{equation}\label{x-3}
\|\pm H[\pi^\e_j  \vartheta^\pm(\tau )[h]]- a_{\tau_,j}H[(\pi_j^\e h)'']\|_{H^{r-1}}\leq \frac{\nu}{4 }\|\pi^\e_j h\|_{H^{r+1}}+K\|h\|_{_{H^{r'+1}}}. 
\end{equation}  
To start with, we note that since   $H\in\kL(H^{r-1}(\R))$ is  an isometry we have
\[ 
 \|\pm H[\pi^\e_j  \vartheta^\pm(\tau )[h]]-a_{\tau_,j}H[(\pi_j^\e h)'']\|_{H^{r-1}}\leq\|\pm\pi^\e_j  \vartheta^\pm(\tau )[h]- a_{\tau_,j}(\pi_j^\e h)''\|_{H^{r-1}},
 \] 
and it remains to estimate the right side of the latter inequality. 
To this end we first infer from~\eqref{thetatau} that  
\begin{equation*}
\pm\pi^\e_j  \vartheta^\pm(\tau )[h]- a_{\tau_,j}(\pi_j^\e h)''=\pi^\e_j\bA(\tau f)^*[\vartheta^\pm (\tau)[h]]
+\pi^\e_j\kappa(\tau f)[h]- a_{\tau_,j}(\pi_j^\e h)''.
\end{equation*} 
Noticing that $\|a_\tau\|_{{\rm BUC}^{s-3/2}}\leq3\|f'\|_{{\rm BUC}^{s-3/2}} $ and~${{a_\tau(x)\to1}}$ for~$|x|\to\infty$ uniformly with respect to $\tau\in[0,1]$
and using the  estimate
\begin{align*}
\|g_1g_2\|_{H^{r-1}}\leq  C(\|g_1\|_\infty\|g_2\|_{H^{r-1}}+\|g_2\|_\infty\|g_1\|_{H^{r-1}})\qquad\text{for $g_1,\, g_2\in H^{r-1}(\mathbb{R}),$}
\end{align*} 
we have in view of $\chi_j^\e\pi_j^\e=\pi_j^\e$ that
\begin{align*}
\|\pi^\e_j\kappa(\tau f)[h]- a_{\tau_,j} (\pi_j^\e h)'' \|_{H^{r-1}}&\leq \|(a_\tau-a_\tau(x_j^\e))(\pi_j^\e h)''\|_{H^{r-1}}+K\|h\|_{H^{r'+1}} \\[1ex]
 &\leq C\|\chi_j^\e(a_\tau-a_\tau(x_j^\e))\|_\infty\|\pi_j^\e h\|_{H^{r+1}}+K\|h\|_{H^{r'+1}} \\[1ex]
 &\leq \frac{\nu}{8} \|\pi_j^\e h\|_{H^{r+1}}+K\|h\|_{H^{r'+1}} 
 \end{align*}
  for all $\tau\in[0,1]$, $|j|\leq N-1$, and $h\in  H^{r+1}(\R)$, provided that $\e$ is sufficiently small.
Similarly, for $j=N$ we  have
  \begin{align*}
\|\pi^\e_N\kappa(\tau f)[h]-a_{\tau_,N}(\pi_N^\e h)''\|_{H^{r-1}} &\leq \|(a_\tau-1)(\pi_N^\e h)''\|_{H^{r-1}}+K\|h\|_{H^{r'+1}} \\[1ex]
 &\leq C\|\chi_N^\e(a_\tau-1)\|_\infty\|\pi_N^\e h\|_{H^{r+1}}+K\|h\|_{H^{r'+1}} \\[1ex]
 &\leq \frac{\nu}{8} \|\pi_N^\e h\|_{H^{r+1}}+K\|h\|_{H^{r'+1}}. 
 \end{align*}
Furthermore,  appealing to Lemma~\ref{L:AL2} if $|j|\leq N-1$, respectively to Lemma~\ref{L:AL3} if $ j=N$, we find together with the representation \eqref{adjBNM} of $\bA(\tau f)^*$ 
that, if $\e$ is sufficiently small, then
 \[
 \|\pi^\e_j\bA(\tau f)^*[\vartheta^\pm (\tau)[h]]\|_{H^{r-1}}\leq \frac{\nu}{8 C_0}\|\pi^\e_j\vartheta^\pm(\tau )[h]\|_{H^{r-1}}+K\|\vartheta^\pm(\tau )[h]\|_{_{H^{r'-1}}},
 \]
 and, together with  \eqref{x-1} and the property \eqref{reg:curve} (with $r=r'$),   we get
  \[
 \|\pi^\e_j\bA(\tau f)^*[\vartheta^\pm (\tau)[h]]\|_{H^{r-1}}\leq \frac{\nu}{8 }\|\pi^\e_jh\|_{H^{r+1}}+K\|h\|_{_{H^{r'+1}}}
 \]
 for all $\tau\in[0,1]$, $-N+1\leq j\leq N$, and $h\in  H^{r+1}(\R)$.
This proves \eqref{x-3}.\medskip

 Combining  the estimates \eqref{EST:0}, \eqref{x-2}, and \eqref{x-3}, we  conclude that \eqref{D1} holds true and this completes  the proof.
\end{proof}

In Proposition~\ref{P:AP} we have locally approximated $\Phi(\tau f)$ by Fourier multipliers $2a_{\tau,j} \Phi(0)$, and, since $f'$ is a bounded function,
 there exists a constant $\eta=\eta(\|f'\|_\infty)\in(0,1)$ such that~${2a_{\tau,j}\in[\eta,\eta^{-1}]}$.
Elementary Fourier analysis arguments enable us to conclude there exists a constant $\kappa_0=\kappa_0(\eta)\geq1$ such  that for all $\delta\in[\eta,\eta^{-1}]$ and all $\re\lambda\geq 1$  we have  
 \begin{align}
\bullet &\,\, \text{$\lambda-\delta\Phi(0)\in {\rm Isom}(H^{r+1}(\R),H^{r-2}(\R))$},\label{L:FM1}\\[1ex]
\bullet &\,\,  \kappa_0\|(\lambda-\delta\Phi(0))[h]\|_{H^{r-2}}\geq |\lambda|\cdot\|h\|_{H^{r-2}}+\|h\|_{H^{r+1}} \quad  \text{for all  $h\in H^{r+1}(\R)$.}\label{L:FM2}
\end{align}

 We are now in a position to establish Theorem~\ref{T:GP}.
 \begin{proof}[Proof of Theorem~\ref{T:GP}]
Let $\kappa_0\geq1$ be as identified in \eqref{L:FM2}. 
Setting $\nu:=(2\kappa_0)^{-1}$, Proposition~\ref{P:AP} ensures there exist~${\e\in(0,1)}$, a $\e$-locali\-za\-tion family $\{(\pi_j^\e,x_j^\e)\,:\, -N+1\leq j\leq N\},$ and  a constant~${K=K(\e)}$ such
 that for all $\tau\in[0,1],$  $ -N+1\leq j\leq N$,  and  $h\in H^{r+1}(\R)$ we have
 \begin{equation*} 
  \|\pi_j^\e \Phi(\tau f) [h]-2a_{\tau_,j}\Phi(0)[\pi_j^\e h]\|_{H^{r-2}}\leq \nu \|\pi_j^\e h\|_{H^{r+1}}+K\|  h\|_{H^{ r'+1}}.
 \end{equation*} 
Recalling~\eqref{L:FM2}, we also have 
  \begin{equation*} 
    \kappa_0\|(\lambda-2a_{\tau_,j}\Phi(0))[\pi^\e_jh]\|_{H^{r-2}}\geq |\lambda|\cdot\|\pi^\e_jh\|_{H^{r-2}}+ \|\pi^\e_j h\|_{H^{r+1}}
 \end{equation*}
 for all  $\tau\in[0,1],$ $-N+1\leq j\leq N$,  $\re \lambda\geq 1$, and $h\in H^{r+1}(\R)$.
Combining these estimates, we get
 \begin{align*}
   2\kappa_0\|\pi_j^\e(\lambda-\Phi(\tau f))[h]\|_{H^{r-2}}\geq& 2\kappa_0\|(\lambda-2a_{\tau_,j}\Phi(0))[\pi^\e_j h]\|_{H^{r-2}}\\[1ex]
   &\qquad-2\kappa_0\|\pi_j^\e\Phi(\tau f)[h]-2a_{\tau_,j}\Phi(0)[\pi^\e_j h]\|_{H^{r-2}}\\[1ex]
   \geq& 2|\lambda|\cdot\|\pi^\e_j h\|_{H^{r-2}}+ \|\pi^\e_j h\|_{H^{r+1}}-2\kappa_0K\|  h\|_{H^{r'+1}}.
 \end{align*}
 Summing  up over $j$, the estimates \eqref{equivnorm},  Young's inequality, and the interpolation property
\begin{align}\label{IP}
[H^{s_0}(\R),H^{s_1}(\R)]_\theta=H^{(1-\theta)s_0+\theta s_1}(\R),\qquad\theta\in(0,1),\, -\infty< s_0\leq s_1<\infty,
\end{align}
cf. e.g. \cite[Section 2.4.2/Remark~2]{Tr78},  where $[\cdot,\cdot]_\theta$ is the complex interpolation functor, imply there exist constants~$\kappa\geq1$  and~$\omega\geq1 $ such that 
for all   $\tau\in[0,1],$   $\re \lambda\geq \omega$, and  $h\in H^{r+1}(\R)$ we have
  \begin{align}\label{KDED}
   \kappa\|(\lambda-\Phi(\tau f ))[h]\|_{H^{r-2}}\geq |\lambda|\cdot\|h\|_{H^{r-2}}+ \| h\|_{H^{r+1}}.
 \end{align}
The property \eqref{L:FM1} together with the method of continuity \cite[Proposition I.1.1.1]{Am95} and \eqref{KDED} now yield
\begin{align}\label{DEDK2}
   \omega-\Phi(f)\in {\rm Isom}(H^{r+1}(\R), H^{r-2}(\R)).
 \end{align}
The desired generator property follows now directly from   \eqref{KDED} (with $\tau=1$) and \eqref{DEDK2}, see \cite[Chapter I]{Am95}.
\end{proof}

\subsection{The proof of the main result}\label{Sec:3.3}
We complete this section with the proof of the main result which exploits the abstract quasilinear parabolic theory presented in \cite{Am93} (see also \cite[Theorem 1.1]{MW20}).

\begin{proof}[Proof of Theorem~\ref{MT1}]
Let $E_1:=H^{\ov r+1}(\R)$, $E_0:=H^{\ov r-2}(\R)$, and~${E_\theta:=[E_0,E_1]_\theta},\, {\theta\in(0,1)}$.
Defining $\beta:=2/3$ and $\alpha:=(r-\ov r+2)/3$, it holds that $0<\beta<\alpha<1$,  ${E_\beta=H^{\ov r}(\R)}$, and~${E_\alpha=H^{r}(\R)}$.
Theorem~\ref{T:GP} together with the regularity property \eqref{reg:PHI} (both with~${r=\ov r}$) ensure that $-\Phi \in {\rm C}^{\infty}(E_\beta,\kH(E_1, E_0))$.
This enables us to apply  \cite[Theorem 1.1]{MW20}  in the context of the quasilinear parabolic evolution  problem~\eqref{NNEP}.
Consequently, given~${f_0\in H^r(\R)}$, there exists a unique maximal classical solution  $f= f(\,\cdot\, ; f_0)$ to~\eqref{NNEP} such that
 \begin{equation}\label{classol} 
 f\in {\rm C}([0,T^+),H^r(\mathbb{R}))\cap {\rm C}((0,T^+), H^{{\ov r}+1}(\mathbb{R}))\cap {\rm C}^1((0,T^+), H^{{\ov r}-2}(\mathbb{R})) 
  \end{equation}
   and  
  \begin{equation}\label{lochoel}
f\in   {\rm C}^{\zeta}([0,T^+), H^{\ov r}(\mathbb{R}),
  \end{equation}
  where $T^+=T^+(f_0)\in(0,\infty]$ is the maximal existence time and  $\zeta\in(0,\alpha-\beta]$  can be chosen arbitrary small, cf. \cite[Remark 1.2~(ii)]{MW20}.
  Moreover, the mapping $[(t,f_0)\mapsto f(t;f_0)]$ defines a semiflow on $H^r(\R)$ which is smooth in the open set
  \[
\{(t,f_0)\,:\, f_0\in H^r(\R),\, 0<t<T^+(f_0)\}\subset \R\times H^r(\R).  
  \]
  
  We next prove that the uniqueness claim holds in the class of classical solutions; that is of solutions which satisfy merely~\eqref{classol}.
  To this end prove that each such solution with the property~\eqref{classol} satisfies~\eqref{lochoel} for some small $\zeta.$
  Let therefore $T\in(0,T^+)$ be arbitrary but fixed.
  Then, there exists a positive constant~$C$ such that for all $t\in[0,T]$ we have
  \begin{equation}\label{est0}
  \|\kappa(f(t))\|_{H^{r-2}}=\Big\|\Big(\frac{(f(t))'}{(1+(f(t))')^{1/2}}\Big)'\Big\|_{H^{r-2}}\leq\Big\| \frac{(f(t))'}{(1+(f(t))'^2)^{1/2}} \Big\|_{H^{r-1}}\leq C.
  \end{equation}
  Moreover, in virtue of Lemma~\ref{L:MP0-}~(i)-(ii) $\pm 1-\bA(f(t))\in {\rm Isom}(L_2(\R))\cap {\rm Isom}(H^{r-1}(\R))$ for all $t\in[0,T]$,  and together with \eqref{IP} and the observation that 
  $0<2-r<r-1$ we get that $\pm 1-\bA(f(t))\in  {\rm Isom}(H^{2-r}(\R))$.
  Since by Lemma~\ref{L:MP0}~(i)-(ii) and \eqref{adjBNM} the mapping 
  \[
  [t\mapsto \bA(f(t))]:[0,T]\to \kL(L_2(\R))\cap \kL(H^{r-1}(\R))
  \]
   is in particular continuous, we may chose $C>0$ sufficiently large to guarantee that for all~${t\in[0,T]}$ it holds that 
 \begin{equation}\label{est1}
  \|(\pm1-\bA(f(t))^{-1}\|_{\kL(H^{2-r}(\R))} \leq C.
  \end{equation}
  Therefore, setting $\vartheta^\pm(t):=(\pm1-\bA(f(t)^*)^{-1}[\kappa(f(t))]\in H^{\ov r-1}(\R),$ $t\in(0,T]$, we  infer from~\eqref{est0} and \eqref{est1} that
   there exists a constant $C>0$ such that for all $t\in(0,T]$ we have
  \begin{align*}
  \|\vartheta^\pm(t)\|_{H^{r-2}}&=\sup_{\|\psi\|_{ H^{2-r}}=1}|\langle \vartheta^\pm(t)| \psi\rangle_2| =\sup_{\|\psi\|_{ H^{2-r}}=1}|\langle (\pm1-\bA(f(t)^*)^{-1}[\kappa(f(t))]| \psi\rangle_2|\\[1ex]
  &=\sup_{\|\psi\|_{ H^{2-r}}=1}|\langle \kappa(f(t))| (\pm1-\bA(f(t))^{-1}[\psi]\rangle_2|\\[1ex]
  &\leq \sup_{\|\psi\|_{ H^{2-r}}=1}\|\kappa(f(t))\|_{H^{r-2}}\|(\pm1-\bA(f(t))^{-1}[\psi]\|_{ H^{2-r}}\\[1ex]
  &\leq C.
  \end{align*}
  Above $\langle\cdot|\cdot\rangle_2$ is the $L_2$-scalar product.
Since $\Phi(f(t))[f(t)]=(\bB(f(t))^*[\vartheta^-(t)-\vartheta^+(t)])'$ for~${t\in(0,T]}$, see~\eqref{phi},  it follows now from Lemma~\ref{L:MP0}~(iv) and \eqref{adjBNM} there exists a constant~${C>0}$ such that 
 for all $t\in(0,T]$  
 \begin{align*}
 \|\Phi(f(t))[f(t)]\|_{H^{r-3}}&\leq \|\bB(f(t))^*[\vartheta^-(t)-\vartheta^+(t)]\|_{H^{r-2}}\leq C(1+\|(f(t))'\vartheta^\pm(t)\|_{H^{r-2}}\leq C.
 \end{align*}
 To derive  the last inequality we have use the continuity of the multiplication operator
 \[
[(g_1,g_2)\mapsto g_1g_2]:H^{r-1}(\R)\times H^{2-r}(\R)\to H^{2-r}(\R), 
 \]
 see \cite[Eq. (1.8)]{MM21}.
 To summarize, we have shown that 
 \[
 \sup_{t\in(0,T]}\Big\|\frac{df}{dt}(t)\Big\|_{H^{r-3}}\leq C.
 \]
 Since $f\in {\rm C}([0,T],H^r(\mathbb{R}))$, the latter estimate together with the mean value theorem and the observation that for $\zeta:=(r-\ov r)/3 $ it holds that
 $[H^{r-3}(\R), H^r(\R)]_{1-\zeta}=H^{\ov r}(\R)$, see~\eqref{IP}, yields
 \[
\|f(t_2)-f(t_1)\|_{H^{\ov r}}\leq C\|f(t_2)-f(t_1)\|_{H^{\ov r-3}}^{\zeta} \leq C |t_2-t_1|^{\zeta} \qquad\text{for all $0\leq t_1\leq t_2\leq T$,}
\]
which proves \eqref{lochoel}.
Recalling  Proposition~\ref{P:1}, we have established the existence and uniqueness of   maximal classical solutions to \eqref{PB}.
Finally, the parabolic smoothing property~\eqref{eq:fg}  may be shown by using a parameter trick  employed also  in other settings, see \cite{An90, ES96, PSS15, MBV19}. 
Since the arguments are more or less identical to those used in \cite[Theorem 1.3]{MBV19}, we refrain to present them here.

\end{proof}

\appendix
\section{Some properties  of the singular integral operators $B_{n,m}^0(f)$}\label{Appendix A}

We recall some recent results that are available for the singular integrals operators~$B_{n,m}^0(f)$ introduced in~\eqref{defB0} and which are used in the analysis in Section~\ref{Sec:3}.
We begin with a commutator type estimate.
\begin{lemma}\label{L:AL1} 
Let $n,\, m \in \N$,   $r\in(3/2,2)$, $f\in H^r(\R)$, and  ${\varphi\in {\rm BUC}^1(\R)}$ be given. 
Then, there exists  a constant $K$ that  depends only on~$ n,$~$m, $~$\|\varphi'\|_\infty, $ and~$\|f\|_{H^r}$  such that for all~${\alpha\in L_2(\R)}$  we have
 \begin{equation*} 
  \|\varphi B_{n,m}^0(f)[\alpha]- B_{n,m}^0(f)[ \varphi \alpha]\|_{H^1}\leq K\| \vartheta\|_{2}
 \end{equation*}
\end{lemma}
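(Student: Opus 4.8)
The statement is a commutator estimate: multiplication by $\varphi$ almost commutes with $B_{n,m}^0(f)$, the error being controlled in $H^1$ by the mere $L_2$-norm of the density (there is a typo in the statement, $\vartheta$ should be $\alpha$). The plan is to write the commutator as a singular integral kernel acting on $\alpha$ and to exploit the extra smoothness gained because the difference $\varphi(x)-\varphi(x-s)$ cancels the singularity of $1/s$. First I would expand, using the definition \eqref{defB0}-\eqref{BNM},
\[
\varphi(x) B_{n,m}^0(f)[\alpha](x)- B_{n,m}^0(f)[\varphi\alpha](x)
=\frac1\pi\,\PV\!\int_\R \frac{\prod_{i=1}^n(\delta_{[x,s]}f)/s}{\prod_{i=1}^m\big[1+[(\delta_{[x,s]}f)/s]^2\big]}\,\frac{\varphi(x)-\varphi(x-s)}{s}\,\alpha(x-s)\,{\rm d}s.
\]
The factor $(\varphi(x)-\varphi(x-s))/s$ is bounded by $\|\varphi'\|_\infty$, so the principal value is in fact an absolutely convergent integral and the operator is no longer genuinely singular; the point is to upgrade $L_2$-boundedness to $H^1$-boundedness.

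\textbf{Reduction to the $B_{n,m}$ family.} The natural route is to identify the commutator, after differentiating once in $x$, with a finite linear combination of operators of the form $B_{n',m'}(a_1,\dots,a_{m'})[b_1,\dots,b_{n'},\,\cdot\,]$ with all $a_i=f$ and the $b_i$ drawn from $\{f,\varphi\}$, acting on $\alpha$, plus lower-order terms; then Lemma~\ref{L:MP0}~(i) gives the desired $L_2\to L_2$ bounds and hence the $H^1$ bound. Concretely, set $T\alpha$ equal to the right-hand side above and compute $(T\alpha)'$. Differentiating under the integral sign (legitimate since the kernel is no longer singular and, after the substitution $s\mapsto x-s$ in the argument of $\alpha$, the $x$-derivative can be moved off $\alpha$), the $x$-derivative falls on the three structural pieces: on each $\delta_{[x,s]}f$, producing $f'(x)-$terms that re-assemble into $B_{n,m}$-type kernels with one $f$ replaced by $f'\in H^{r-1}$, hence bounded on $L_2$ with constant controlled by $\|f'\|_\infty\le C\|f\|_{H^r}$; on the denominator, producing higher powers of $(\delta f)/s$ in numerator and denominator, again of $B_{n',m'}$ type; and on the difference quotient $(\varphi(x)-\varphi(x-s))/s$, which rewrites as $\varphi'(x)/s$ minus the same translated difference quotient structure, i.e. again a combination of a genuinely $B_{n,m}$-singular piece (now with $\varphi'$ in a $b_i$-slot, bounded on $L_2$ by $\|\varphi'\|_\infty$) and a mild one. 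After substituting back $x-s\mapsto s$ one also sees terms where the derivative hits $\alpha(x-s)$; these are integrated by parts in $s$ to move the derivative onto the (smooth, $O(1/s)$) kernel, producing yet more $B_{n',m'}$ operators applied to $\alpha$ rather than $\alpha'$. Collecting, $\|(T\alpha)'\|_2\le K\|\alpha\|_2$ with $K=K(n,m,\|\varphi'\|_\infty,\|f\|_{H^r})$, and since also $\|T\alpha\|_2\le K\|\alpha\|_2$ trivially, the $H^1$ estimate follows.

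\textbf{Main obstacle.} The delicate point is the bookkeeping in the differentiation step: keeping track of the finitely many $B_{n',m'}$ operators that appear and verifying that in each of them the "$a$"-slots are filled by $f$ (so that Lemma~\ref{L:MP0}~(i) applies with a constant depending on $\|f'\|_\infty$) while the "$b$"-slots are filled either by $f$ — contributing a factor $\|f'\|_\infty$ — or by $\varphi$/$\varphi'$ — contributing $\|\varphi'\|_\infty$ — and that no unbounded density-derivative $\alpha'$ survives. The integration by parts in $s$ is what removes any stray $\alpha'$; one must check the boundary terms at $s=\pm\infty$ vanish, which they do since the kernel decays like $1/s$ and $\alpha\in L_2$, and that no new singularity at $s=0$ is created, which is guaranteed because each surviving factor $(\varphi(x)-\varphi(x-s))/s$ or $(\delta_{[x,s]}f)/s$ stays bounded. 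Everything else is the routine machinery already packaged in Lemma~\ref{L:MP0}~(i), so the proof is essentially a careful expansion followed by an appeal to that lemma; I would present the $n=m=0$ and $n=m=1$ cases explicitly and indicate that the general case is identical in structure.
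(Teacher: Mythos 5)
The paper does not actually prove this lemma: its ``proof'' is the citation \cite[Lemma~12]{AM22}, so there is no in-paper argument to measure yours against. Your overall strategy --- writing the commutator as the integral with the extra factor $(\delta_{[x,s]}\varphi)/s$ in the kernel (and noting the typo $\vartheta\to\alpha$), differentiating in $x$, integrating by parts in $s$ to remove the derivative from $\alpha$, and reassembling everything into operators covered by Lemma~\ref{L:MP0}~(i) --- is the standard route in this circle of papers and is sound in outline.

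There is, however, a concrete misstep in the bookkeeping which, as written, invalidates the appeal to Lemma~\ref{L:MP0}~(i). When $\partial_x$ hits a factor $\delta_{[x,s]}f/s$ it produces $\delta_{[x,s]}f'/s$, and you declare the result to be ``a $B_{n,m}$-type kernel with one $f$ replaced by $f'$, bounded with constant controlled by $\|f'\|_\infty$''; similarly you put $\varphi'$ into a $b_i$-slot and claim the bound $\|\varphi'\|_\infty$. But Lemma~\ref{L:MP0}~(i) charges $\prod_i\|b_i'\|_\infty$, so a $b$-slot occupied by $f'$ or $\varphi'$ costs $\|f''\|_\infty$ resp.\ $\|\varphi''\|_\infty$, neither of which is available here ($f\in H^r$ with $r<2$, $\varphi\in{\rm BUC}^1$ only). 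The repair is to never form the difference quotient of a derivative: either split $\delta_{[x,s]}g'/s=g'(x)/s-g'(x-s)/s$, absorbing $g'(x)$ as a bounded outer multiplier and $g'(x-s)\alpha(x-s)=(g'\alpha)(x-s)$ into the density, or, more cleanly, note that after the integration by parts only $(\partial_x+\partial_s)$ acts on the kernel and $(\partial_x+\partial_s)\bigl[\delta_{[x,s]}g/s\bigr]=s^{-1}\bigl(g'(x)-\delta_{[x,s]}g/s\bigr)$, so that only $g'(x)$ as an outside factor and $g$ itself in $b$-slots ever appear; then every term is a bounded multiplier times $B_{n',m'}(f,\dots,f)[b_1,\dots,b_{n'},\beta]$ with $b_i\in\{f,\varphi\}$ and $\beta\in\{\alpha,f'\alpha,\varphi'\alpha\}$, and Lemma~\ref{L:MP0}~(i) applies. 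A second, smaller point: your ``trivial'' estimate $\|T\alpha\|_2\le K\|\alpha\|_2$ comes from the triangle inequality and costs $\|\varphi\|_\infty$, not $\|\varphi'\|_\infty$; the stated dependence of $K$ on $\|\varphi'\|_\infty$ alone cannot be recovered this way (for $n=m=0$ the commutator is $[\varphi,H]$, whose $L_2$-norm is comparable to $\|\varphi\|_{\rm BMO}$, which $\|\varphi'\|_\infty$ does not control), but this is a defect of the statement rather than of your argument and is harmless where the lemma is applied, since there $\varphi=\pi_j^\e$ satisfies $\|\varphi\|_\infty\le1$.
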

\begin{proof}
See \cite[Lemma 12]{AM22}.
\end{proof}

The next results describe how  to localize the singular integrals operators~$B_{n,m}^0(f)$.  
They may be viewed as generalizations of the method of freezing the coefficients of elliptic differential operators.

\begin{lemma}\label{L:AL2} 
Let $n,\, m \in \N$, $r\in(3/2,2),$ $r'\in(3/2,r)$, and  $\nu\in(0,1)$ be given. 
Let further~${f\in H^r(\mathbb{R})}$ and   $a \in \{1\}\cup H^{r-1}(\mathbb{R})$.
For any sufficiently small $\e\in(0,1)$, there exists
a constant $K$ that depends only on $\e,\, n,\, m,\, \|f\|_{H^r},$  and  $\|a\|_{H^{r-1}}$ (if $a\neq1$)   such that for all~${|j|\leq N-1}$ and  $\alpha\in H^{r-1}(\mathbb{R})$ we have
 \begin{equation*} 
  \Big\|\pi_j^\e  B_{n,m}^0(f)[ a\alpha]-\frac{a(x_j^\e) (f'(x_j^\e))^n}{[1+(f'(x_j^\e))^2]^m}H[\pi_j^\e \alpha]\Big\|_{H^{r-1}}\leq \nu \|\pi_j^\e  \alpha\|_{H^{r-1}}+K\| \alpha\|_{H^{r'-1}}. 
 \end{equation*}
\end{lemma}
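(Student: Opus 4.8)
\textbf{Proof plan for Lemma~\ref{L:AL2}.}
The plan is to reduce everything to a kernel-commutator estimate after freezing the coefficient at the point $x_j^\e$. First I would write $B_{n,m}^0(f)$ via the kernel representation~\eqref{BNM}, so that $B_{n,m}^0(f)[a\alpha](x)$ is the principal value integral of $\big(\prod_i \delta_{[x,s]}f/s\big)^n\big[1+(\delta_{[x,s]}f/s)^2\big]^{-m}(a\alpha)(x-s)/s$. The leading term that should survive is obtained by replacing each $\delta_{[x,s]}f/s$ by $f'(x_j^\e)$ and $a(x-s)$ by $a(x_j^\e)$; since $\tfrac1\pi\PV\int_\R \alpha(x-s)/s\,\mathrm ds = H[\alpha](x)$, this produces exactly the claimed multiple of $H[\pi_j^\e\alpha]$ once one multiplies by $\pi_j^\e$ and uses $\chi_j^\e\pi_j^\e=\pi_j^\e$ to insert the cutoff $\chi_j^\e$ around the frozen point. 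So the difference to be estimated splits into (a) the commutator $\pi_j^\e B_{n,m}^0(f)[a\alpha]-B_{n,m}^0(f)[\pi_j^\e a\alpha]$-type terms, which by Lemma~\ref{L:AL1} (with $\varphi=\pi_j^\e$) are controlled in $H^1\hookrightarrow H^{r-1}$ by $K\|\alpha\|_2\le K\|\alpha\|_{H^{r'-1}}$, and (b) a ``main'' term in which the $\pi_j^\e$ sits inside, and where one replaces the variable coefficients by their values at $x_j^\e$.

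For part (b) the key point is that on $\supp\pi_j^\e$, which is an interval of length $\e$ containing $x_j^\e$, the quantities $\delta_{[x,s]}f/s$ and $a(x-s)$ differ from $f'(x_j^\e)$ and $a(x_j^\e)$ by something small in a suitable norm: writing $\chi_j^\e(a-a(x_j^\e))$ and $\chi_j^\e(\delta_{[\cdot,s]}f/s - f'(x_j^\e))$, their $L_\infty$-norms (and $H^{r-1}$-seminorms localized by $\chi_j^\e$) are $O(\e^{r-3/2})$ by the embedding $H^{r-1}(\R),H^{r}(\R)\hookrightarrow {\rm BUC}^{r-3/2}$ and the mean value theorem; this is exactly the mechanism already used in Step~3 of the proof of Proposition~\ref{P:AP}. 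One then expands the rational function $\prod\big[1+(\delta_{[x,s]}f/s)^2\big]^{-m}$ around $f'(x_j^\e)$ (it is smooth in its argument, with bounded derivatives since $\|f'\|_\infty$ is bounded), so that each resulting error term is a singular integral operator of the form $B_{n',m'}(\dots)[\dots,\beta\alpha]$ with $\beta$ one of the small factors $\chi_j^\e(a-a(x_j^\e))$ or $\chi_j^\e(\delta f/s-f'(x_j^\e))$. Applying Lemma~\ref{L:MP0}~(iv), these are bounded $\kL(H^{r-2}(\R))$-operators; but since I need an $H^{r-1}$ bound I would instead use Lemma~\ref{L:MP0}~(ii) (valid for $H^k$, $k\ge 2$) together with an interpolation/product estimate, or better, absorb the small factor to gain the $\nu$: the operator norm on $H^{r-1}$ of each error term is $\le C\big(\|\text{small factor}\|_\infty + \e^{-1}\|\text{small factor}\|_{H^{r-1}\text{-loc}}\big) \le C\e^{r-3/2}$ plus lower-order contributions bounded by $K\|\alpha\|_{H^{r'-1}}$. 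Choosing $\e$ small makes the first bound $\le\nu$, giving the stated inequality.

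The main obstacle I anticipate is bookkeeping at the $H^{r-1}$ level rather than at $L_2$ or $H^{r-2}$: the small-coefficient gain $\e^{r-3/2}$ must beat the loss of a full derivative coming from wanting an $H^{r-1}$ (not $H^{r-2}$) estimate on $\pi_j^\e\alpha$, and one has to be careful that the product estimates $\|g_1g_2\|_{H^{r-1}}\le C(\|g_1\|_\infty\|g_2\|_{H^{r-1}}+\|g_1\|_{H^{r-1}}\|g_2\|_\infty)$ are applied with the low-regularity factor always carrying the cutoff $\chi_j^\e$, so its $H^{r-1}$-norm is genuinely $O(\e^{r-3/2})$ and not merely $O(1)$. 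A secondary subtlety is the case $a=1$: then there is no $\|a\|_{H^{r-1}}$ to invoke and the $a$-dependent error terms simply vanish, so that branch is easier and should be dispatched first. I would also remark that the $j=N$ tail piece, where one freezes at $x_N^\e=\infty$ (i.e.\ replaces the coefficients by their limits at infinity, which for $f'$ is $0$ and for $a\in H^{r-1}$ is $0$ as well), is handled separately in Lemma~\ref{L:AL3}, so here one only treats $|j|\le N-1$.
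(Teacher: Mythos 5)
Note first that the paper does not prove this lemma at all: it simply cites \cite[Lemma~13]{AM22} for $a=1$ and \cite[Lemma~D.5]{MP2021} for $a\in H^{r-1}(\R)$. Your overall strategy (freeze the coefficients at $x_j^\e$, peel off commutators with $\pi_j^\e$ via Lemma~\ref{L:AL1}, and absorb the frozen-coefficient errors using their $O(\e^{r-3/2})$ smallness) is indeed the strategy of those references, so the plan points in the right direction. However, as written it has two genuine gaps.

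First, the claim that $\chi_j^\e\big(\delta_{[\cdot,s]}f/s-f'(x_j^\e)\big)$ has $L_\infty$-norm $O(\e^{r-3/2})$ is false uniformly in $s$: by the mean value theorem $\delta_{[x,s]}f/s=f'(\xi)$ with $\xi$ between $x-s$ and $x$, so for $x\in\supp\chi_j^\e$ one only gets $|f'(\xi)-f'(x_j^\e)|\lesssim(|s|+\e)^{r-3/2}$, which is merely $O(1)$ once $|s|$ is of order one. The cutoff in the outer variable does not localize the kernel coefficient in $s$, so the analogy with Step~3 of Proposition~\ref{P:AP} (where $a_\tau$ is a pointwise multiplier, not a kernel coefficient) breaks down. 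The missing ingredient is a near/far decomposition of the singular integral: after the commutator step the density is $\pi_j^\e a\alpha$, supported in an interval of length $\e$, so the genuinely singular part of the integral only involves $|s|\lesssim\e$ (where your smallness argument works), while the far part $|s|\gtrsim\e$ has a nonsingular kernel and must be estimated separately as a lower-order term of size $K\|\alpha\|_{H^{r'-1}}$. This splitting is the technical core of the cited proofs and is absent from your plan. Second, the quantitative bound you propose for the error operators on $H^{r-1}$, namely $C\big(\|\text{small}\|_\infty+\e^{-1}\|\text{small}\|_{H^{r-1}\text{-loc}}\big)$, does not close: with the smallness $O(\e^{r-3/2})$ the second term is $O(\e^{r-5/2})$, which diverges as $\e\to0$ since $r<2<5/2$. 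None of Lemma~\ref{L:MP0}~(i)--(iv) supplies an $\kL(H^{r-1}(\R))$-estimate of the required form with a constant that becomes small with $\e$; you would need to import (or prove) an additional operator bound at the $H^{r-1}$ level in which the small factor enters only through $L_\infty$-type quantities of localized differences, which is precisely what the cited lemmas provide.
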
  
\begin{proof}
See  \cite[Lemma~13]{AM22} if $a=1$, respectively   \cite[Lemma~D.5]{MP2021} if~${a\in H^{r-1}(\R)}$.
\end{proof}

 Lemma~\ref{L:AL3}  below describes how to localize the operators $B^0_{n,m}(f)$ ``at infinity''.

\begin{lemma}\label{L:AL3} 
Let $n,\, m \in \N$, $r\in(3/2,2),$ $r'\in(3/2,r)$, and  $\nu\in(0,1)$ be given. 
Let further~${f\in H^r(\mathbb{R})}$    and~${a\in\{1\}\cup H^{r-1}(\mathbb{R})}$.
For any sufficiently small  $\e\in(0,1)$, there exists a constant~$K$ that depends only on~$\e,\, n,\, m,\, \|f\|_{H^r},$   and $\|a\|_{H^{r-1} }$ (if $a\neq1$)  such that for all~${\alpha\in H^{r-1}(\mathbb{R})}$ 
  \begin{equation*}
  \|\pi_N^\e  B_{n,m}^0(f)[a \alpha]\|_{H^{r-1}}\leq \nu \|\pi_N^\e \alpha\|_{H^{r-1}}+K\| \alpha\|_{H^{r'-1}}\qquad \text{if $n\geq 1$ or $a\in H^{r-1}(\R)$},
 \end{equation*} 
  and
  \begin{equation*}
  \|\pi_N^\e  B_{0,m}^0(f)[\alpha]-H[\pi_N^\e \alpha]\|_{H^{r-1}}\leq \nu \|\pi_N^\e \alpha\|_{H^{r-1}}+K\| \alpha\|_{H^{r'-1}}
 \end{equation*} 
\end{lemma}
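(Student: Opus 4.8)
The plan is to read both inequalities as the ``point at infinity'' analogue of Lemma~\ref{L:AL2}: I freeze the functions occurring inside the kernel of $B_{n,m}^0(f)$ at their limits as $|x|\to\infty$. Since $r-1>1/2$ we have $H^{r-1}(\R)\hookrightarrow {\rm C}_0(\R)$, so $f'(x)\to0$ and, when $a\in H^{r-1}(\R)$, also $a(x)\to0$ as $|x|\to\infty$. Hence the frozen coefficient
\[
\lim_{|x|\to\infty}\frac{a(x)\,(f'(x))^n}{[1+(f'(x))^2]^m}
\]
vanishes whenever $n\geq1$ or $a\in H^{r-1}(\R)$, while it equals $1$ when $n=0$ and $a=1$; in that last case $B_{0,m}^0(0)=H$, which is exactly why the statement splits into two alternatives. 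First I would fix $\delta\in(0,1)$ (to be chosen in terms of $\nu$ only at the very end) and, using the decay just noted, pick $R>0$ with $|f'(x)|\leq\delta$ and $|a(x)|\leq\delta$ for $|x|\geq R$. Then I choose $\e$ so small that $\supp\chi_N^\e\subset\{|x|\geq1/\e-\e\}\subset\{|x|\geq R\}$, so that $f'$ (and $a$) are uniformly $\delta$-small on $\supp\chi_N^\e$.

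Next I would perform the freezing itself. Using $\pi_N^\e=\pi_N^\e\chi_N^\e$ I insert $\chi_N^\e$ wherever the functions $f$ and $a$ enter the kernel, so that every coefficient is evaluated on $\supp\chi_N^\e$, where it differs from its value at infinity by at most $\delta$. The error produced when $\pi_N^\e$ is commuted past $B_{n,m}^0(f)$ is of lower order and, by the commutator estimate of Lemma~\ref{L:AL1} together with the representation \eqref{adjBNM}, is controlled by $K\|\alpha\|_{H^{r'-1}}$; this is the source of the weaker-norm remainder in both inequalities. The genuinely singular, near-diagonal contribution is then an operator of $B_{n,m}$-type whose relevant coefficients are $\delta$-small, and the task is to convert this smallness into the factor $\nu$ in front of $\|\pi_N^\e\alpha\|_{H^{r-1}}$.

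For the algebraic mechanism I would argue by cases. When $n\geq1$ the numerator carries a factor $(\delta_{[x,s]}f)/s$; splitting according to whether $x$ and $x-s$ lie in the same connected component of $\supp\pi_N^\e$, on the same side the segment joining them stays in $\{|x|\geq R\}$ so that $|(\delta_{[x,s]}f)/s|\leq\delta$, whereas on opposite sides $|s|\gtrsim R$ and the contribution is off-diagonal, hence smoothing and absorbed into $K\|\alpha\|_{H^{r'-1}}$. Estimating the resulting operators by the multilinear bounds of Lemma~\ref{L:MP0} — extracting one small factor $\delta$ and controlling the remaining factors by $\|f\|_{H^r}$ — yields $C\delta\|\pi_N^\e\alpha\|_{H^{r-1}}+K\|\alpha\|_{H^{r'-1}}$; the case $n=0$ with $a\in H^{r-1}(\R)$ is identical, the small factor now being $a$ itself. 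For the second inequality ($n=0$, $a=1$) I would use the identity
\[
\frac{1}{\prod_{i=1}^m[1+((\delta_{[x,s]}f)/s)^2]}-1=-\Big(\frac{\delta_{[x,s]}f}{s}\Big)^2 P\Big(\frac{\delta_{[x,s]}f}{s}\Big),
\]
with $P$ a bounded rational function, which represents $B_{0,m}^0(f)-H$ as a sum of operators each carrying two factors $(\delta_{[x,s]}f)/s$; these fall under the $n\geq2$ mechanism and are therefore $\delta$-small on $\supp\chi_N^\e$. The residual commutator $\pi_N^\e H[\alpha]-H[\pi_N^\e\alpha]$ is again handled by Lemma~\ref{L:AL1}. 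Choosing $\delta$ so that $C\delta\leq\nu$ then closes both estimates.

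The hard part will be the sharp near-diagonal estimate in the fractional norm $H^{r-1}$ with $r-1\in(1/2,1)$: one must genuinely pull out the small factor $\delta$ in \emph{full} order rather than merely in $L_2$ (where Lemma~\ref{L:MP0}(i) would already suffice), while simultaneously relegating the opposite-side and commutator pieces to the weaker norm $\|\alpha\|_{H^{r'-1}}$. This is precisely the technical core that, for the finite localization points, is carried out in \cite{AM22, MP2021}; the present argument is the same with the freezing performed at the point at infinity, the only structural novelty being the same-side/opposite-side splitting of the difference quotient $(\delta_{[x,s]}f)/s$ forced by the two connected components of $\supp\pi_N^\e$.
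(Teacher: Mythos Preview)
The paper does not give an independent proof of this lemma; it simply cites \cite[Lemma~15]{AM22} for the case $a=1$ and \cite[Lemma~D.6]{MP2021} for $a\in H^{r-1}(\R)$. Your proposal is a correct high-level outline of precisely what those references do: freeze the coefficients at their limiting value at $|x|\to\infty$ (which is $0$ for $f'$ and for $a\in H^{r-1}(\R)$, hence the dichotomy in the statement), exploit the smallness of $f'$ and $a$ on $\supp\chi_N^\e$ once $\e$ is small, and push all commutators and far-off-diagonal contributions into the $H^{r'-1}$-remainder via Lemma~\ref{L:AL1} and the mapping bounds of Lemma~\ref{L:MP0}. The algebraic identity you write for $B_{0,m}^0(f)-H$ and the same-side/opposite-side splitting needed because $\supp\pi_N^\e$ has two components are exactly the structural points that distinguish the $j=N$ case from the finite-$j$ case of Lemma~\ref{L:AL2}.

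Your own honest caveat is the right one: the passage from ``the coefficients are $\delta$-small on $\supp\chi_N^\e$'' to a genuine $H^{r-1}$-operator estimate with constant $C\delta$ is the technical heart, and it cannot be read off directly from Lemma~\ref{L:MP0}(iii)--(iv) as stated here (those give bounds depending on full Sobolev norms of the $b_i$, not on local smallness). That step requires the finer decomposition and interpolation arguments carried out in the cited references. So what you have written is not a self-contained proof but an accurate roadmap of the cited proofs; since the paper itself gives nothing more than the citation, your sketch is in fact more informative than the paper's own proof while following the same route.
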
  
\begin{proof}
See \cite[Lemma~15]{AM22} if $a=1$, respectively  \cite[Lemma D.6]{MP2021} if $a\in H^{r-1}(\R)$.
\end{proof}

\bibliographystyle{siam}
\bibliography{L_MS}
\end{document}